\let\oldcdot\cdot
\let\cdot\oldcdot
\theoremstyle{break}
\numberwithin{equation}{section}
\DeclareMathOperator{\BO}{\textit{O}}
\newcommand\BigO[1]{\BO{\hspace{-0.22em}\left(#1\right)}}
\DeclareMathOperator{\so}{\textit{o}}
\newcommand\smallo[1]{\so{\hspace{-0.15em}\left(#1\right)}}
\DeclareMathOperator{\Esp}{\mathbb{E}}
\newcommand\Expect[1]{\Esp{\hspace{-0.22em}\left[#1\right]}}
\DeclareMathOperator{\Pb}{\mathbb{P}}
\newcommand{\Prob}[1]{\Pb{\hspace{-0.22em}\left(#1\right)}}
\newcommand{\Va}{\operatorname{Var}}
\renewcommand\Var[1]{\Va{\hspace{-0.22em}\left(#1\right)}}
\renewcommand{\indic}{\mathds{1}}
\newcommand\Indic[1]{\indic{_{\{#1\}}}}
\DeclareCiteCommand{\cite}[\color{ForestGreen}\mkbibbrackets] 
  {\usebibmacro{prenote}}
  {\usebibmacro{cite}}
  {\multicitedelim}
  {\usebibmacro{postnote}}
\tikzset{->-/.style={decoration={
  markings,
  mark=at position #1 with {\arrow{>}}},postaction={decorate}}}
\tikzset{-<-/.style={decoration={
  markings,
  mark=at position #1 with {\arrow{<}}},postaction={decorate}}}
\title{Uniform random colored complexes}
\shorttitle{Uniform random colored complexes}
\author{A.~Carrance}
\begin{document}
\maketitle

\begin{fabsmfabstract}
  We present here random distributions on $(D+1)$-edge-colored, bipartite graphs with a fixed number of vertices $2p$. These graphs encode $D$-dimensional orientable colored complexes. We investigate the behavior of those graphs as $p \to \infty$. The techniques involved in this study also yield a Central Limit Theorem for the genus of a uniform map of order $p$, as $p \to \infty$.
\end{fabsmfabstract}

\section{Introduction}

For $D \geq 1$, we call \boldmath$(D+1)$\textbf{-colored graphs}, \unboldmath$(D+1)$-regular graphs, equipped with a proper $(D+1)$-coloring of their edges. $(D+1)$-colored graphs have been known from the 1970's, and the work of Pezzana {\cite{pez1,pez2}}, to be an encoding of piecewise-linear (PL) topological structures, that we will call \textbf{complexes} in a sense precised below. Among those structures are PL manifolds, which thus admit a combinatorial and graph-theoretical formulation. This formulation was further developped by Gagliardi and others (see {\cite{gagli}), leading notably to classification results for 3- and 4-dimensional PL (hence smooth) manifolds with a small number of cells {\cite{cristo1,cristo2}}. Additionally to these achievements in PL-topology, $(D+1)$-colored graphs have recently garnered interest from theoretical physicists, as they are at the heart of a new approach to quantum gravity, \textbf{colored tensor models} (see \cite{gurau-ryan,Gurau2016ab} for detailed reviews). As the quantized space-time described by colored tensor models is a PL-structure corresponding to a random distribution on $(D+1)$-colored graphs, this is an incentive to study such distributions.\\

  Our work can be related to other random models: first, we can compare it to Euclidean Dynamical Triangulations, that also have the purpose to define a quantum spacetime as the continuum limit of random triangulations in any dimension (see \cite{thorleifsson} for a detailed review). In this approach, the topology is fixed to be spherical, and the random distribution depends on a parameter that tunes a curvature constraint, while we do not put any constraint on the topology nor the curvature. Note also that we do not work with any triangulations, but with a specific type of complexes that we will define below. Moreover, these models have mostly been studied numerically, while we make a probabilistic and combinatorial investigation of ours.

 Likewise, we can relate this paper to works in random maps, as in dimension 2 our models can be seen as particular models of random maps. Furthermore, to look for a tentative continuum spacetime as the scaling limit of the discrete spacetime given by our model, we will consider the obtained complexes as metric spaces, just like the Brownian map is the scaling limit of several families of planar maps seen as metric spaces (see \cite{legall, miermont} for instance). While the most studied models of random maps have conditions on the \emph{face} degrees, with for instance random triangulations and quadrangulations, the graphs we consider for $D=2$ have 3-valent \emph{vertices}, so we are more naturally dealing with objects dual to the ones typically studied in the random maps literature. Moreover, most works on random maps deal with maps of a fixed genus (most often planar maps), or whose genus is assumed to grow linearly with their size (like in \cite{angel-chapuy-curien-ray}), whereas we do not fix the topology.

There is no fixed topology either in random simplicial complexes constructed as higher dimensional analogues of Erdös-Rényi graphs, such as \cite{costa-farber,Kahle2014aa}. Such models also share with the present work the fact that they are defined in any dimension. However, they are constructed quite differently, as their realizations are subcomplexes of a standard simplex (see the definitions below), whereas we build a complex by randomly gluing simplexes together.

This  gluing construction is very close to the various models of random gluings of polygons (see \cite{pipp-schleich, chmutov-pittel}), and we will indeed use similar techniques throughout this paper. Just like these random gluing models, our models can be seen as generalizations of the \textbf{configuration model}, which starts from a set of vertices with prescribed valences to form a random graph, by taking a uniform matching of the half-edges attached to these vertices. Some results on the configuration model will be of crucial use to prove some of our results.\\

Since the distributions arising the most naturally in colored tensor models are very involved from a mathematical point of view (notably, they are stongly non-uniform), we present here two simpler models on bipartite, vertex-labelled, $(D+1)$-colored graphs of fixed order $2p$. We focus on the limit $p \to \infty$, which is the first step towards the continuum limit for our tentative space-time. Our first model, which we analyze in \Cref{sect-unif}, is the obvious, uniform one: as we will see, the associated topological structure has a very singular behavior as $p \to \infty$. Our second model, which we call the \textbf{quartic model}, is described in \Cref{sect-quartic}. It has more physical foundation, and is richer topologically, but is still ill-fitted for the purpose of quantum gravity. Over the course of the study of this model, we also obtain a Central Limit Theorem for the genus of a uniform map of order $p$, as $p \to \infty$ (see \Cref{subsection-quartic-degree}). We then study in \Cref{sect-unif-decol} a generalization of the quartic model, that we call the class of \textbf{uniform-uncolored models}, which possess a similar asymptotical behavior.

Before going into technical details, let us fix some very general notations. We will say that an event $A$ occurs \textit{asymptotically almost surely} (a.a.s.), if $\Prob{A} \xrightarrow[p \to \infty]{} 1$. We will note $V(G)$ the vertex set of a graph $G$, $E(G)$ its edge set, and $k(G)$ its number of connected components (c.c.).

\subsection*{Colored graphs, bubbles and Gurau degree}

We now give a few necessary definitions about colored graphs.

\begin{defn}[Colored graphs]
Let $G$ be a $(D+1)$-regular graph, and $\zeta$ a function $E(G) \to C$, where $C$ is some set with cardinality $D+1$. The couple $(G,\zeta)$ is a \boldmath$(D+1)$\textbf{-colored graph}, if, for all \unboldmath$v \in V(G)$, for all $c \in C$, there is one and only one edge $e$ incident to $v$, such that $\zeta(e)=c$.\\
If there is no ambiguity, we will simply note $G$ for the colored graph. In that case, if not specified, $C$ will be the set of integers $\{0, 1, \dots, D \}$. 
\end{defn}

\begin{rem}
A very special class of $(D+1)$-colored graphs are those with only two vertices, and all edges joining the two vertices (see \Cref{melon}). We call those graphs, \textbf{melons}.
\end{rem}

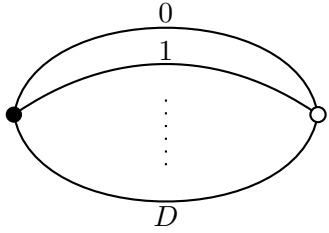
\begin{figure}[htp]
\centering
\begin{tikzpicture}
\draw [thick](-2,0) to [out=80,in=100](2,0);
\draw [thick](-2,0) to [out=35,in=145](2,0);
\draw [thick](-2,0) to [out=-80,in=-100](2,0);
\draw [fill] (-2,0) circle [radius=0.1];
\draw [fill=white,thick] (2,0) circle [radius=0.1];
\node at (0,1.35) {$0$};
\node at (0,0.85) {$1$};
\node at (0,-1.35) {$D$};
\draw [loosely dotted, thick] (0,0.2) -- (0,-0.7);
\end{tikzpicture}
\caption{Melons are the simplest example of ($D+1$)-colored graphs.}
\label{melon}
\end{figure}

\begin{defn}[Bubbles]\label{def-bubbles}
Let $G$ be a $(D+1)$-colored graph, and let $\{i_1, \dots, i_k\}$ be a subset of the color set $C$. Consider the graph $G_{\hat{\imath}_1, \dots, \hat{\imath}_k}$ obtained from $G$ by erasing all the $i_l$-colored edges, for $l=1, \dots, k$. The connected components of $G_{\hat{\imath}_1, \dots, \hat{\imath}_k}$ are called \boldmath$(D+1-k)$\textbf{-bubbles of $G$} (of colors \unboldmath$\{0, 1, \dots, D \} \backslash \{i_1,\dots, i_d\}$). We will write \boldmath$\mathcal{B}^{\hat{\imath}_1, \dots, \hat{\imath}_k}_{(\rho)}$ for a bubble of \unboldmath$G$ of colors $\{0, 1, \dots, D \} \backslash \{i_1,\dots, i_d\}$, with $\rho$ indexing the different bubbles of the same color. We will note \boldmath$b_k(G)$ the number of \unboldmath$k$-bubbles of $G$.\\
The vertices of $G$ are its 0-bubbles, its edges are the 1-bubbles, and its 2-bubbles (which are bicolored cycles) are called its \textbf{faces}.
\end{defn}

When dealing with bubbles, we will often write $\hat{\imath}_1 \dots \hat{\imath}_d$ for $\{0, 1, \dots, D \} \backslash \{i_1,\dots, i_d\}$, to simplify notations.

\begin{defn}[Embedding]
Let $G$ be a graph, and $S$ a Riemann surface. An \textbf{embedding of \boldmath$G$ into $S$} is a continuous and one-to-one function \unboldmath$i \colon G \to S$. We can then consider that $G$, as a topological space related to a 1-cellular complex, is included in $S$. The connected components of $S \setminus G$ are called the \textbf{regions} of the embedding. If all the regions are homeomorphic to an open disk, the embedding is said to be \textbf{2-cellular}.
\end{defn}

\begin{defn}[Regular embedding]
Let $G$ be a $(D+1)$-colored graph. A 2-cellular embedding of $G$ is said to be \textbf{regular} if there exists a $(D+1)$-cycle $\uptau \in \mathfrak{S}_{D+1}$, such that any region is bounded by a bicolored cycle, of colors $\{i,\uptau(i)\}$, for some $i \in C$. Any cycle $\uptau$ gives rise to a regular embedding.
\end{defn}

\begin{rem}
There is a two-to-two correspondence between regular embeddings of $G$ and $(D+1)$-cycles, as a cycle and its reverse are associated to the same two embeddings.
\end{rem}

If $G$ is $(D+1)$-colored and bipartite, all its regular embeddings are necessarily into orientable surfaces (see for instance \cite{gagli2}). This makes the following definition possible:

\begin{defn}[Degree]
The \textbf{Gurau degree} $\omega(G)$ of a $(D+1)$-colored graph $G$ is the sum of the genera of its regular embeddings:
\begin{equation*}
\omega(G) = \frac{1}{2} \sum_{\uptau} g_{\uptau}
\end{equation*}
where the sum runs over the $(D+1)$-cycles of $\kS_{D+1}$. The factor $1/2$ is a matter of convention, here we follow \cite{gurau-ryan}.
\end{defn}

The Gurau degree of $G$ can also be written in terms of its number of vertices, edges and faces:  

\begin{lemma}
\label{degree-faces}
\textnormal{\cite{gurau-rivasseau}}
For a connected bipartite $(D+1)$-colored graph $G$ with $2p$ vertices, one has:
\begin{equation*}
\omega(G) = \frac{(D-1)!}{2} \lbt\frac{D(D-1)}{2}p + D - b_2(G) \rbt.
\end{equation*}
\end{lemma}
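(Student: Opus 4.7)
My plan is to apply Euler's formula to each regular embedding and then average over cyclic orderings of colors. Since $G$ is bipartite and $(D+1)$-colored, every regular embedding lies on an orientable surface, so for each $(D+1)$-cycle $\uptau \in \kS_{D+1}$ the associated embedding has genus $g_\uptau$ determined by $2 - 2g_\uptau = V - E + F_\uptau$, where $V$, $E$, $F_\uptau$ are its numbers of vertices, edges and faces. Here $V = 2p$ and $E = (D+1)p$ (the graph is $(D+1)$-regular on $2p$ vertices), independently of $\uptau$; only $F_\uptau$ varies with the embedding.

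The key combinatorial step is to compute $\sum_{\uptau} F_\uptau$. By the definition of a regular embedding, the faces of the $\uptau$-embedding are exactly the bicolored cycles whose two colors $\{a,b\}$ appear as a consecutive pair $\{i,\uptau(i)\}$ in $\uptau$. So a given bicolored cycle (i.e.\ a 2-bubble) of colors $\{a,b\}$ contributes to $F_\uptau$ for precisely those $\uptau$ in which $a$ and $b$ are adjacent. A standard count (glue $a$ and $b$, take cyclic arrangements of the resulting $D$ symbols, then multiply by 2 for the internal order) gives $2(D-1)!$ such cycles. Summing over all 2-bubbles,
\[
\sum_{\uptau} F_\uptau = 2(D-1)!\, b_2(G).
\]

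Plugging into the definition of the Gurau degree, and using that there are $D!$ cyclic permutations of $D+1$ elements,
\begin{align*}
\omega(G) &= \tfrac{1}{2}\sum_{\uptau} g_\uptau
= \tfrac{1}{2}\sum_{\uptau}\left(1 - \tfrac{V - E + F_\uptau}{2}\right) \\
&= \tfrac{D!}{2} - \tfrac{1}{4}\bigl(2p\cdot D! - (D+1)!\,p + 2(D-1)!\, b_2(G)\bigr) \\
&= \tfrac{D!}{2} + \tfrac{D!(D-1)}{4}p - \tfrac{(D-1)!}{2}b_2(G) \\
&= \tfrac{(D-1)!}{2}\left(\tfrac{D(D-1)}{2}p + D - b_2(G)\right),
\end{align*}
as claimed.

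The one step that requires real care is the face count: one must check that each 2-bubble contributes exactly once to $F_\uptau$ for every $\uptau$ in which its two colors are adjacent (no over- or under-counting from pairs $\{i,\uptau(i)\}$ and $\{\uptau(i),\uptau^{-1}(i)\}$ possibly coinciding, which only happens when $D+1 \leq 2$, a degenerate range), and that $2(D-1)!$ really counts the cycles in which a prescribed unordered pair is adjacent. Everything else is bookkeeping with Euler's formula.
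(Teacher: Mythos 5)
Your proof is correct and follows essentially the same route as the paper's (one-line) argument: Euler's formula applied to each regular embedding, combined with the count that a given bicolored cycle appears as a region in exactly $2(D-1)!$ of the $D!$ cyclic orderings (equivalently, $(D-1)!$ of the $D!/2$ distinct embeddings). The algebra and the adjacency count both check out.
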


\begin{proof}
$G$ has $\frac{D!}{2}$ distinct regular embeddings, and each face of $G$ corresponds to a region in $(D-1)!$ different regular embeddings.
\end{proof}

\subsection*{Trisps}
\label{sec-trisps}

With the bubbles of a $(D+1)$-colored graph $G$, we can build a $D$-dimensional triangulated space (trisp), which is a particular type of cellular complex with simplicial cells. For the sake of precision, let us fix some definitions and notations related to simplices that are needed for the definition of a trisp, for which we follow the conventions of \cite{kozlov}.

\begin{defn}[Simplices]
A \textbf{geometrical $\mathbf{n}$-simplex} $\sigma$ is the convex enveloppe of a set $A$ of $n+1$ affinely independent points in $\mathbb{R}^N$, for some $N \geq n$. The \textbf{dimension} of $\sigma$ is $|A|-1=n$. The convex enveloppes of the subsets of $A$ are called \textbf{sub-simplices} of $\sigma$, or its \textbf{faces}, and the points defining $\sigma$ are called its \textbf{vertices}. We will note $\sigma \subseteq \tau$ to signify that $\sigma$ is a face of $\tau$. A $d$-face of $\sigma$ is a face of dimension $d$, and the $\mathbf{d}$\textbf{-skeleton} of $\sigma$ is the set of its faces of dimension lower or equal to $d$.\\
We note $\langle x_1, \dots, x_n \rangle$ the simplex with vertices $x_1, \dots x_n$.\\
The \textbf{standard $\mathbf{n}$-simplex} is $\langle (1,0, \dots, 0), (0,1,0, \dots, 0), \dots, (0, \dots,0, 1) \rangle$, where the point coordinates are taken in the canonical basis of $\mathbb{R}^{n+1}$.
\end{defn}

Let us start from some sets $(S_i)_{i \in \mathbb{N}}$ of geometrical simplices, where $S_i$ contains $i$-simplices, seen as copies of the standard $i$-simplex. Then, for $m \leq n$, for each order-preserving injection $f \colon \{ 1, \dots, m+1\} \to \{ 1, \dots, n+1\}$, take a map $B_f \colon S_n \to S_m$, so that: 
\begin{enumerate}
\item \label{first} if $f, g$ are two such injections and are composable : $\{ 1, \dots, l+1\} \xrightarrow{g} \{ 1, \dots, m+1\} \xrightarrow{f} \{ 1,\dots, n+1\}$, then: $B_{f \circ g} = B_g \circ B_f$
\item \label{second} $B_{id_{\{1,\dots, n+1\}}} = id_{S_n}$.
\end{enumerate}
This abstract structure implicitly contains a topological space. Indeed, an order-preserving injection $f \colon \{ 1, \dots, m+1\} \to \{ 1, \dots, n+1\}$ induces a linear map $M_f \colon \mathbb{R}^{m+1} \to \mathbb{R}^{n+1}$, which sends the $k$-th vector of the canonical basis of $\mathbb{R}^{m+1}$ to the $f(k)$-th vector of the canonical basis of $\mathbb{R}^{n+1}$. This map can be restricted to a homeomorphism from the standard $m$-simplex to a certain $m$-sub-simplex of the standard $n$-simplex. For $\sigma \in S_n$, this homeomorphism therefore glues an $m$-sub-simplex of $\sigma$ to the standard $m$-simplex $B_f(\sigma) \in S_m$. The condition $B_{f \circ g} = B_g \circ B_f$ ensures that these gluings are coherent.

\begin{defn}[Trisp]
\label{trisp}
A complex defined in the above way is called a \textbf{triangulated space}, or \textbf{trisp}.
\end{defn}

Let $G$ be a $(D+1)$-colored graph. We now quickly map out the construction of a trisp $\Delta(G)$ with the bubbles of $G$. To each $(D+1-k)$-bubble $\mathcal{B}^{\hat{\imath}_1 \dots \hat{\imath}_k}_{(\rho)}$, we associate a $(k-1)$-simplex whose vertices are indexed by the missing colors $i_1, \dots, i_k$. This yields some set $S_k$, with an order on the vertices of each simplex. The gluing maps are defined in the following way: if $f \colon \{1,\dots, m+1\} \to \{1, \dots, n+1\}$ is an order-preserving injection, and if $\sigma \in S_n$ corresponds to the bubble $\mathcal{B}^{\hat{\imath}_1 \dots \hat{\imath}_{n+1}}_{(\rho)}$, $B_f$ sends it to the $m$-simplex corresponding to the $(D-m)$-bubble obtained from $\mathcal{B}^{\hat{\imath}_1 \dots \hat{\imath}_{n+1}}_{(\rho)}$ by adding the colors in $\set{i_{1},\dotsc,i_{n+1}}\setminus\{i_{f(1)}, \dots, i_{f(m+1)}\}$. By construction, the maps $B_f$ satisfy conditions (\ref{first}) and (\ref{second}) of \Cref{trisp}.\\

The trisp $\Delta(G)$ induced by a $(D+1)$-colored graph $G$ has severable notable properties:

\begin{thm}
\textnormal{\cite{gurau-2010}}\\
If $G$ is connected, the induced trisp $\Delta(G)$ is a simplicial pseudo-manifold of dimension $D$, \emph{i.e.}:
\begin{enumerate}
\item $S_i = \varnothing$ for $i > D$, and $S_D \neq \varnothing$
\item $\Delta(G)$ is \textbf{pure}, \emph{i.e.} any simplex is the face of a $D$-simplex
\item it is \textbf{strongly connected}, \emph{i.e.} any two $D$-simplices can be joined by a chain of $D$-simplices in which each pair of neighboring $D$-simplices has a common $(D-1)$-simplex
\item it is \textbf{non-branching}, \emph{i.e.} any $(D-1)$-simplex is a face of at most two $D$-simplices.
\end{enumerate}
Moreover, $\Delta(G)$ is a simplicial pseudo-manifold \textbf{without boundary}, \emph{i.e.} each of its $(D-1)$-simplices is actually a face of exactly two $D$-simplices.
\end{thm}

\begin{remsb}
\begin{itemize}
\item As the $(D+1-k)$-bubbles of $G$ correspond to the $(k-1)$-simplices of $\Delta(G)$, we sometimes say that $\Delta(G)$ is the \textbf{dual complex} of $G$.
\item If we cut open some edges of a bipartite $(D+1)$-colored graph $G$, the previous construction will yield a pseudomanifold with a boundary consisting of the $(D-1)$-simplices corresponding to the open half-edges.
\item As said before, the physical motivations of our work lead us to consider the complex $\Delta(G)$ as a metric space, with the structure given by its graph distance: the elements of this metric space are the vertices of $\Delta(G)$, \emph{i.e.} its 0-simplexes, and the distance between two points is the number of edges of the smallest path from one to the other in the 1-skeleton of $\Delta(G)$. The two first criteria to get a scaling limit will therefore be:
\begin{enumerate}
\item that the number of points of $\Delta(G)$ goes to infinity as $p \to \infty$
\item that the typical distance between two points of $\Delta(G)$ goes to infinity as $p \to \infty$.
\end{enumerate}
\end{itemize}
\end{remsb}

\subsection*{Permutations}
\label{sec-permutations}

Let us now note that the set of bipartite $(D+1)$-colored graphs with $2p$ labelled vertices (the positive and negative vertices being labelled independently from 1 to $p$ each), is in bijection with the $(D+1)$-tuples of permutations $(\alpha_0, \dots, \alpha_D) \in (\mathfrak{S}_p)^{D+1}$.\\

Indeed, given such a labelled graph, for each $i \in C$, define a permutation $\alpha_i$ by: $\alpha_i(k)=l$ if there is an $i$-colored edge linking the $k$-th negative vertex and the $l$-th positive one. Reciprocally, given such a tuple $(\alpha_0, \dots, \alpha_D)$, we can consider $p$ numbered negative vertices and $p$ numbered positive ones, and link them according to the permutations.\\

In the sequel, we will use the permutation formulation to define random bipartite vertex-labelled $(D+1)$-colored graphs.

\begin{rem}
The labellings of the vertices, combined with the coloring of the edges, yield a labelling of the half-edges, and with this formulation we count the possible pairings of the half-edges, \emph{i.e.} Wick contractions, in the language of quantum field theory. 
\end{rem}

\section{Uniform model}
\label{sect-unif}

We consider a $(D+1)$-tuple of random permutations $(\alpha_0, \dots, \alpha_D)$, all independent and uniform on $\mathfrak{S}_p$, for $D \geq 1$. These permutations induce a $(D+1)$-colored, bipartite random graph $U^{D}_p$, with $2p$ labelled vertices. It is clear that $U^{D}_p$ follows the uniform measure on this set of graphs.

\begin{rem}
We can note that this model is a ``colored version'' of the well-known configuration model \cite{Wormald1999aa}, that will appear more explicitly in \Cref{sect-quartic,sect-unif-decol}.
\end{rem}

\subsection{Connectedness}
\label{subsection-unif-conn}

We show, similarly to Pippenger and Schleich {\cite{pipp-schleich}}, that a.a.s. $U^{D}_p$ is connected, and more precisely:

\begin{thm}
\label{unif-connect}
Let $U^{D}_p$ be the random graph defined above, with $D \geq 2$. Then
\begin{equation*}
\Prob{U^{D}_p \text{ connected}} = 1 - \frac{1}{p^{D-1}} + \BigO{\frac{1}{p^{2(D-1)}}}.
\end{equation*}
\end{thm}

\begin{proof}
We first prove an upper bound on $\Prob{U^{D}_p \text{ not connected}}$. For $U^{D}_p$ to be not connected (n.c.), it must be decomposable into at least two closed subgraph, and, considering the smallest of these subgraphs, it must have a closed subgraph with at most $2\lfloor p/2 \rfloor$ vertices. Thus
\begin{align*}
\Prob{U^{D}_p \text{ n.c.}} \leq & \sum\limits_{1 \leq k \leq \lfloor \frac{p}{2} \rfloor} \Prob{\exists \text{ closed subgraph with $2k$ vertices}} \\
\leq & \sum\limits_{1 \leq k \leq \lfloor \frac{p}{2} \rfloor} F_k \text{ , where } F_k = \begin{pmatrix}
p \\
k
\end{pmatrix}^{1-D}.
\end{align*}
Since
\begin{equation*}
\frac{F_{k+1}}{F_k} = \frac{k+1}{p - k} \leq 1 \ \ \forall k=1, \dots, \Big\lfloor \frac{p}{2} \Big\rfloor - 1,
\end{equation*}
we get 
\begin{equation*}
\Prob{U^{D}_p \text{ n.c.}} \leq F_1 + F_2 + \left(\frac{p}{2} - 2 \right) F_3 = \frac{1}{p^{D-1}} + \left(\frac{2}{p(p-1)}\right)^{D-1} + \BigO{p^{-3D+4}},
\end{equation*}
and in particular
\begin{equation*}
\Prob{U^{D}_p \text{ connected}}\xrightarrow[p \to \infty]{} 1.
\end{equation*}

Now, to get a lower bound on $\Prob{U^{D}_p \text{ n.c.}}$, consider the probability of having exactly one closed melon: 
\begin{dmath*}
\Prob{U^{D}_p \text{ n.c.}} \geq \Prob{\exists !\text{ closed melon}}
= \Prob{\exists \text{ at least 1 closed melon}} - \Prob{\exists \text{ at least 2 closed melons}}
\geq \frac{1}{p^{D-1}} - \frac{1}{2}\frac{1}{(p(p-1))^{D-1}} \, .
\end{dmath*}
Thus $\Prob{U^{D}_p \text{ connected}} = 1 - \frac{1}{p^{D-1}} + \BigO{\frac{1}{p^{2(D-1)}}}$.
\end{proof}

Knowing that $U^{D}_p$ is connected a.a.s., the next step is to investigate its average number of connected components. We have the following result:

\begin{thm}
\label{unif-cc}
For $D\geq 2$, one has:
\begin{equation*}
\Expect{k(U^{D}_p)}= 1 + \BigO{\frac{1}{p^{D-1}}}.
\end{equation*}
\end{thm}

\begin{proof}
We derive upper bounds on $\Prob{k(U^{D}_p)=k}$, for $k \geq 2$. We already know that $\Prob{k(U^{D}_p)= 2} \leq \frac{1}{p^{D-1}} + \BigO{\frac{1}{p^{2(D-1)}}}$ from \Cref{unif-connect}. We then get an upper bound for $k=3$, decomposing the event that $U^{D}_p$ has 3 closed subgraphs: 
\begin{align*}
\Prob{k(U^{D}_p)=3} &\leq \Prob{k(U^{D}_p)\geq 3} = \Prob{U^{D}_p \text{ has } 3 \text{ closed proper subgraphs}}\\
&\leq \sum_{1 \leq k \leq \lfloor p/2 \rfloor} \Prob{U^{D}_p \text{ has a closed subgraph $U'$ with $2k$ vertices}}\\
&\ \ \ \cdot(\Prob{U' \text{ n.c.}} + \Prob{(U')^{c} \text{ n.c.})}\\
&\leq \sum_{1 \leq k \leq \lfloor p/2 \rfloor} \left(\frac{k!(p-k)!}{p!}\right)^{D-1}\\
&\ \ \ \cdot \left( \sum_{1 \leq l \leq \lfloor k/2 \rfloor}\left(\frac{l!(k-l)!}{k!}\right)^{D-1} + \sum_{1 \leq l \leq \lfloor (p-k)/2 \rfloor}\left(\frac{l!(p-k-l)!}{p-k!}\right)^{D-1}\right)\\
&\leq \frac{2}{p^{2(D-1)}} + \BigO{\frac{1}{p^{3(D-1)}}}.
\end{align*}
And, similarly, for $k \geq 4$:
\begin{align*}
\Prob{k(U^{D}_p) \geq 4 } \leq & \sum_{1 \leq k \leq \lfloor p/2 \rfloor} \Prob{U^{D}_p \text{ has a closed subgraph $U'$ with $2k$ vertices}} \\
& \cdot \left[\Prob{U' \text{ has } \geq 3 \text{ c.c.}} + \Prob{(U')^{c} \text{ has } \geq 3 \text{ c.c.}} + \Prob{U' \text{ n.c. and } (U')^c \text{ n.c.}}\right]\\
\leq & \BigO{\frac{1}{p^{3(D-1)}}}.
\end{align*}

Thus
\begin{align*}
\Expect{k(U^{D}_p)} & \leq 1 -\frac{1}{p^{D-1}} + \frac{2}{p^{D-1}} + \BigO{\frac{1}{p^{2(D-1)}}} + \left(\frac{p(p+1)}{2} - 6\right)\BigO{\frac{1}{p^{3(D-1)}}}\\
&\leq 1 + \BigO{\frac{1}{p^{D-1}}} \ \ \ \text{ for } D \geq 2. \\
\end{align*}
\end{proof}

\begin{rem}
For $D=1$, we have two uniform permutations $\alpha_0$ and $\alpha_1$. The number of connected components of $U^1_p$ is the number of cycles of $\alpha_0 \alpha_1^{-1}$, which is uniform too. Thus,  one has: $\Prob{U^{1}_p \text{ connected}} =\frac{1}{p}$, and, from well-known results on uniform permutations {\cite{arratia}}: $\Expect{k(U^1_p)}= \ln{p} + \BigO{1}$.
\end{rem}

For a given color $i \in \llbracket 0, D \rrbracket$, the graph $(U^{D}_p)_{\hat{\imath}}$, see \cref{def-bubbles}, has the same law as $U^{D-1}_p$ (up to a color renaming). This means that \Cref{unif-cc} can also be used for the number $b_D(U^{D}_p)$ of $D$-bubbles of $U^{D+1}_p$, simply by summing over all colors. This is of particular interest, as these $D$-bubbles correspond to the vertices of the complex dual to our colored graph. It is straightforward to derive the following result:  

\begin{cor}
For $D \geq 3$, one has:
\begin{equation*}
\Expect{b_D(U^{D}_p)}= D+1  + \BigO{\frac{1}{p^{D-2}}}.
\end{equation*}
\end{cor}

\begin{rem}
  This means that for $D \geq 3$, there is typically only one point of each color in the dual complex. This thwarts the hope of defining a continuous $D$-dimensional random space from this simple model by going through the same steps that yield the Brownian Sphere from uniform planar maps. Note that, as stated in the introduction, this uniform model differs from that of uniform planar maps, in that it does not fix the topology.

However, the essential difference might lie in the dimension: indeed, in Euclidean Dynamical Triangulations, Monte Carlo simulations show evidence of a so-called \textbf{crumpled phase} in dimension 3 and 4 even for spherical models, as long as the sampling of the triangulation does not depend too strongly on its curvature (see \cite{thorleifsson}), while for dimension 2 such a phase occurs only when there is no constraint on the topology.
\end{rem}

\subsection{Degree}
\label{subsection-unif-degree}

We now investigate the Gurau degree of $U^D_p$, for $D \geq 2$. According to \Cref{degree-faces}, this is equivalent to studying the number of faces of $U^D_p$, \textit{i.e.} the number of its bicolored cycles. This quantity can be expressed in terms of the permutations $(\alpha_0, \dots, \alpha_D)$: 
\begin{equation*}
b_2(U^D_p)= \sum_{0 \leq i < j \leq D} \mathcal{O}\left(\alpha_i \alpha_j^{-1}\right),
\end{equation*}
where $\mathcal{O}(\alpha)$ is the number of orbits (cycles) of $\alpha$.  The use of well-known results about uniform permutations gives us the following estimations for the average and variance of the number of faces:
\begin{propns}
\begin{align}
\label{unif-face-mean}
\Expect{b_2(U^D_p)} &= \frac{D(D+1)}{2}(\ln{p} + \gamma) + \smallo{1}\\
\label{unif-face-var}
\Var{b_2(U^D_p)}&= \frac{D(D+1)}{2}\ln{p} + \smallo{\ln{p}}.
\end{align}
\end{propns}
In terms of the Gurau degree, this means that:
\begin{align*}
\Expect{\omega(U^D_p)} &= \frac{(D-1)!}{2} \Big(\frac{D(D-1)}{2}p  + D - \frac{D(D+1)}{2}(\ln{p}+\gamma) \Big) + \smallo{1}\\
\Var{\omega(U^D_p)} &= \frac{(D-1)!}{2} \frac{D(D+1)}{2}\ln{p}+\smallo{\ln{p}} .
\end{align*}

\begin{proof}
If $\alpha$ is a uniform permutation of size $p$, then one has (see for instance {\cite{arratia}}):
\begin{align*}
\Expect{\mathcal{O}(\alpha)}&= \sum_{j=1}^{p}\frac{1}{j}=\ln{p}+\gamma+o(1) \text{, where $\gamma$ is the Euler constant}\\
\Var{\mathcal{O}(\alpha)}&= \sum_{j=1}^{p} \frac{j-1}{j^2}=\ln{p} + \gamma -\frac{\pi^2}{6} + o(1).
\end{align*}
\Cref{unif-face-mean} is obtained immediately from this. \Cref{unif-face-var} is obtained after simple calculations, once one notices that two permutations $\alpha_i \alpha_j^{-1}$ and $\alpha_k \alpha_l^{-1}$ are independent, as long as either $i \neq k$, or $j \neq l$.
\end{proof}

To get more precise information, we will now focus on the number of faces of a single regular embedding of $U^D_p$, instead of the total number of faces. We state our main result concerning this in \Cref{unif-jacket-normal-law} : the number of faces of any regular embedding $\mathcal{J}_p$ of $U^D_p$ has a normal limit when $p \to \infty$.\\

We can assume, without loss of generality, that $\mathcal{J}_p$ corresponds to the usual cyclic ordering of the colors $(0 \, 1 \, \dotsm \, D)$. Thus, we are interested in the law of:
\begin{equation*}
F = \sum_{0 \leq i \leq D} \mathcal{O}\left(\alpha_i \alpha_{i+1}^{-1}\right) =: \sum_{0 \leq i \leq D} \mathcal{O}_{i, i+1}
\end{equation*}
where, by convention, $\alpha_{D+1}=\alpha_0$.\\ 
To prove its normal asymptotical behaviour, we consider the distribution of the last permutation, $\alpha_D \alpha_0^{-1} =: \alpha_{D,0}$, conditionally to a given realization \{$\mathcal{C}_{i,i+1}$\} of the respective conjugacy classes $\pi(\alpha_{i,i+1})$ of the $D$ first permutations $\alpha_i \alpha_{i+1}^{-1} =: \alpha_{i,i+1}$. 
We note this distribution $P^{\{\mathcal{C}_{i,i+1}\}}_D$.\\
As $\alpha_{D,0}=\left(\prod_{0 \leq i \leq D-1}\alpha_{i,i+1}\right)^{-1}$, for some given $\{\mathcal{C}_{i,i+1}\}$, the parity of $\alpha_{D,0}$ is fixed. We will note $H:=\mathcal{A}_p$ or $\mathcal{A}_p^c$, according to this parity condition (where $\mathcal{A}_p$ is the alternating group of degree $p$), and $\mathcal{U}_H$ the uniform distribution on $H$. Let us assume that, for $i=0, \dots, D-1$, $\mathcal{C}_{i,i+1}$ has less than $\ln{p}$ fixed points and 2-cycles. We note this hypothesis $(*)$. As stated in \Cref{unif-asympt-indep}, under this assumption, $P^{\{\mathcal{C}_{i,i+1}\}}_D$ converges to $\mathcal{U}_H$ in total variation distance. We prove this using group representation techniques, similarly to Chmutov and Pittel in {\cite{chmutov-pittel}}.\\
Well-known results on the number of cycles of fixed length in a uniform permutation then allow us to deduce \Cref{unif-jacket-faces-law}, \textit{i.e.} that, up to a parity condition, the law $P_F$ of $F$ converges to a convolution product of the laws $P_{i,i+1}$ of the $\mathcal{O}_{i, i+1}$. Finally, we prove the asymptotic normality of $P_F$, using a well-known expression of the number of cycles of a uniform permutation as a sum of Bernoulli variables.\\

Let us now state these results more precisely:

\begin{thm}
\label{unif-jacket-normal-law}
Let $\mathcal{J}_p$ be a regular embedding of $U^D_p$, and $F_{\mathcal{J}_p}$ be the number of faces (\textnormal{i.e.} regions) of $\mathcal{J}_p$. Then the quantity $\frac{F_{\mathcal{J}_p} - \Expect{F_{\mathcal{J}_p}}}{\sqrt{\Var{F_{\mathcal{J}_p}}}}$ converges weakly to the standard normal distribution.  
\end{thm}

\begin{thm}
\label{unif-jacket-faces-law}
With the notations given above, one has:
\begin{equation*}
\lVert P_F - 2\cdot\Indic{(D+1)p-F \textnormal{ \scriptsize even}}P_{0,1} * P_{1,2} * \dotsm * P_{D,0} \rVert = \BigO{\frac{(\ln{p})^D}{p^{D-1}}}
\end{equation*}
where $\lVert \, \cdot \, \rVert$ is the total variation distance.
\end{thm}

\begin{prop}
\label{unif-asympt-indep}
Let us assume $(*)$. Then:
\begin{equation*}
\lVert P^{\{\mathcal{C}_{i,i+1}\}}_D - \mathcal{U}_H \rVert =\BigO{\frac{(\ln{p})^D}{p^{D-1}}}.
\end{equation*}
\end{prop}

Before starting the proof of \Cref{unif-asympt-indep}, it should be noted that, while its steps closely follow those of the proof of Theorem 2.2 in {\cite{chmutov-pittel}}, it employs a few stronger arguments, as we are here dealing with conjugacy classes with a logarithmic number of small cycles, whereas the permutations in {\cite{chmutov-pittel}} have no small cycles. We will detail those differences after the proof.

\begin{proof}
As proved in {\cite{chmutov-pittel}}, we get from the Cauchy-Schwartz inequality and the Plancherel theorem:
\begin{equation*}
\lVert P^{\{\mathcal{C}_{i,i+1}\}}_D - \mathcal{U}_H \rVert ^2 \leq \frac{1}{4} \sum_{\lambda \vdash p, \, \lambda \neq <p>, <1^p>} f^{\lambda} \text{tr}\left[\hat{P}^{\{\mathcal{C}_{i,i+1}\}}_D (\rho^{\lambda})\hat{P}^{\{\mathcal{C}_{i,i+1}\}}_D (\rho^{\lambda})^*\right]
\end{equation*}
where the sum is over the partitions $\lambda=(\lambda_1 \geq \lambda_2 \geq \dots)$ of $p$, $\rho^{\lambda}$ is the irreducible representation of $\mathfrak{S}_p$ associated to $\lambda$, $f^{\lambda}$ is the dimension of this representation, and, for a probability measure $P$ on $\mathfrak{S}_p$, $\hat{P}$ is the Fourier transform of $P$:
\begin{equation*}
\hat{P}(\rho) = \sum_{\sigma \in \mathfrak{S}_p} \rho(\sigma)P(\sigma)
\end{equation*}
for a representation $\rho$.

As the permutations $\alpha_{i,i+1}$, $i=0, \dots, D-1$, are all independent, the law of $\alpha_{D,0}$ writes as a convolution product (even conditionally to the realization of $\{ \mathcal{C}_{i,i+1}\}_{0 \leq i \leq D-1}$):
\begin{equation*}
P^{\{\mathcal{C}_{i,i+1}\}}_D=P_{\alpha_{D,D-1}} * P_{\alpha_{D-1,D-2}} * \dotsm * P_{\alpha_{1,0}}= \mathcal{U}_{\mathcal{C}_{D-1,D}} * \mathcal{U}_{\mathcal{C}_{D-2,D-1}} * \dotsm * \mathcal{U}_{\mathcal{C}_{0,D1}}.
\end{equation*}
Indeed, for $i=0,\dots,D-1$, $\alpha_{i,i+1}$ is uniform on $\mathcal{C}_{i,i+1}$, and so is its inverse $\alpha_{i+1,i}$. So, by applying the Fourier transform:
\begin{equation*}
\hat{P}^{\{\mathcal{C}_{i,i+1}\}}_D= \hat{\mathcal{U}}_{\mathcal{C}_{D-1,D}} \cdot \hat{\mathcal{U}}_{\mathcal{C}_{D-2,D-1}} \cdot \,  \dotsm \, \cdot \hat{\mathcal{U}}_{\mathcal{C}_{0,1}}.
\end{equation*}
Furthermore, as the $\mathcal{U}_{\mathcal{C}_{i,i+1}}$ are invariant under conjugacy, they are necessarily homothecies, by Schur's Lemma. We can therefore write:
\begin{equation*}
\hat{\mathcal{U}}_{\mathcal{C}_{i,i+1}}(\rho^{\lambda})=\frac{\chi^{\lambda}(\mathcal{C}_{i,i+1})}{f^{\lambda}}I_{f^{\lambda}}
\end{equation*}
where $\chi^{\lambda}$ is the character of $\rho^{\lambda}$. We derive from this:
\begin{equation*}
\lVert P^{\{\mathcal{C}_{i,i+1}\}}_D - \mathcal{U}_H \rVert ^2 \leq \frac{1}{4} \sum_{\lambda \vdash p, \, \lambda \neq <p>, <1^p>} \left(\frac{\prod_{0 \leq i \leq D-1}\chi^{\lambda}(\mathcal{C}_{i,i+1})}{(f^{\lambda})^{D-1}}\right)^2.
\end{equation*}

We now decompose this sum into two parts:
\begin{align*}
\lVert &P^{\{\mathcal{C}_{i,i+1}\}}_D - \mathcal{U}_H \rVert^2 \\
&\leq \frac{1}{4} \underbrace{\sum_{\substack{\lambda \neq <p>, <1^p> \\ \lambda_1, \lambda'_1 \leq p - 4}} \left(\frac{\prod_{0 \leq i \leq D-1}\chi^{\lambda}(\mathcal{C}_{i,i+1})}{(f^{\lambda})^{D-1}}\right)^2}_{\Sigma_1} + \underbrace{\sum_{\substack{\lambda \neq <p>, <1^p> \\ \lambda_1 \geq p - 3\text{ or } \lambda'_1 \geq p-3}} \left(\frac{\prod_{0 \leq i \leq D-1}\chi^{\lambda}(\mathcal{C}_{i,i+1})}{(f^{\lambda})^{D-1}}\right)^2}_{\Sigma_2}
\end{align*}
where $\lambda'$ is the dual partition of $\lambda$. We will bound $\Sigma_1$ and $\Sigma_2$ with different inequalities from Gamburd \cite{gamburd} and Larsen-Shalev {\cite{larsen-shalev}}. We start with $\Sigma_1$. Since we have assumed that $\mathcal{C}_{i,i+1}$ has at most $p^{\smallo{1}}$ cycles of size 1 or 2, from {\cite{larsen-shalev}}:
\begin{equation}
\label{larsen-shalev-bound}
\lvert \chi^{\lambda}\left(\mathcal{C}_{i,i+1}\right)\rvert \leq \left(f^{\lambda}\right)^{\frac{1}{3} + \smallo{1}} \text{ when } p \to \infty.
\end{equation}
Therefore:
\begin{equation*}
\forall \lambda \vdash p \ \ \left(\frac{\prod_{0 \leq i \leq D-1}\chi^{\lambda}(\mathcal{C}_{i,i+1})}{(f^{\lambda})^{D-1}}\right)^2 \leq \left(f^{\lambda}\right)^{-4D/3 +2 + \smallo{1}} \text{ when } p \to \infty.
\end{equation*}
And, from {\cite{gamburd}}:
\begin{equation*}
\forall t > 0 \sum_{\substack{\lambda \vdash p \\ \lambda_1, \lambda'_1 \leq p - m}}\left(f^{\lambda}\right)^{-t} = \BigO{p^{-mt}}.
\end{equation*}
Thus $\Sigma_1 = \BigO{p^{-4D + 6}}$.

For $\Sigma_2$, we use another inequality from {\cite{gamburd}}:
\begin{equation*}
f^{\lambda} \geq \begin{pmatrix}
p - a \\
a
\end{pmatrix} \text{ if } \lambda_1 = p - a> \frac{p}{2}
\end{equation*}
which also gives a bound on $f^{\lambda}$ when $\lambda'_1 \geq p- 3$, since $f^{\lambda}= f^{\lambda'}$. Now, similarly to {\cite{chmutov-pittel}}, we show that if $\lambda_1=p-a$, with $a= 1$, 2 or 3:
\begin{equation*}
\lvert \chi^{\lambda}(\mathcal{C}_i)\rvert = \BigO{(\ln{p})^a}.
\end{equation*}

\Yboxdim{1cm}
\newcommand\ylw{\Yfillcolour{yellow}}
\newcommand\wh{\Yfillcolour{white}}
\begin{figure}[htp]
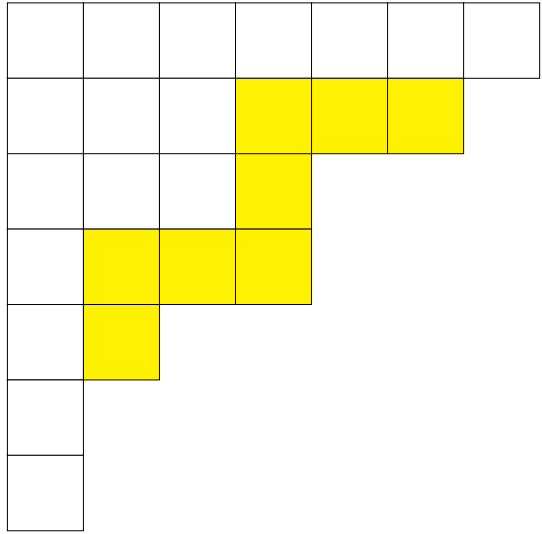

\centering
\gyoung(;;;;;;;,;;;!\ylw;;;,!\wh;;;!\ylw;,!\wh;!\ylw;;;,!\wh;!\ylw;,!\wh;,;)
\caption{Example of a rim hook.}
\label{rim-hook-fig}
\end{figure}
 Indeed, consider $\lambda$ with $\lambda_1 \geq p - 3$. Let $\zeta=(\zeta_1, \zeta_2, \dots)$ be an ordered sequence of the cycle lengths in $\mathcal{C}_{i,i+1}$ (with multiplicity). The Murnaghan-Nakayama rule implies:
\begin{equation}
\label{mur-nak}
\lvert \chi^{\lambda}(\mathcal{C}_i)\rvert \leq g^{\lambda}(\zeta)
\end{equation}
where $g^{\lambda}(\zeta)$ is the number of ways of emptying the Young diagram $Y(\lambda)$ associated to $\lambda$ by deleting rim hooks of successive sizes $(\zeta_1, \zeta_2, \dots)$. Here, by rim hooks of $Y$ we mean the contiguous border strips $R$ of $Y$ that can be removed from $Y$ leaving a proper subdiagram $Y \backslash R$ (see \Cref{rim-hook-fig}).

As $\lambda_1 = p - a \geq p - 3$, $Y(\lambda)$ is made of a first line of $p-a$ cells, and a small inferior sub-diagram $Z$, with $a$ cells. An ordered sequence of hook deletions emptying $Y(\lambda)$ according to $\zeta$ can be decomposed into two parts (see \Cref{rim-hook-deletions}): \\
- first, a sub-sequence of deletions that do no touch the $a$ first cells of the first line; \\
- then, a sub-sequence of deletions starting with one that touches the cell $(1,a)$.\\
The first sub-sequence is composed of horizontal deletions in the first line, and possibly some deletions in $Z$ too. Since at each step, the size of the deleted hook is fixed, the only freedom in this sub-sequence stems from the position of the deletions in $Z$. As $\mathcal{C}_{i,i+1}$ satisfies $(*)$, if $a=1$ there are at most $\BigO{\ln{p}}$ possible choices for the step at which the only possible deletion in $Z$ occurs, if $a=2$ there are at most $\BigO{(\ln{p})^2}$ possible choices (in the case where the two cells in $Z$ are deleted one by one), and likewise for $a=3$.

The second sub-sequence is composed of deletions in a sub-diagram containing at most $a^2$ cells, so the number of possibilities for this sub-sequence is a $\BigO{1}$. Therefore $\lvert \chi^{\lambda}(\mathcal{C}_i)\rvert \leq g^{\lambda}(\zeta) = \BigO{(\ln{p})^a}$.

\Yboxdim{1cm}
\newcommand\gr{\Yfillcolour{gray}}
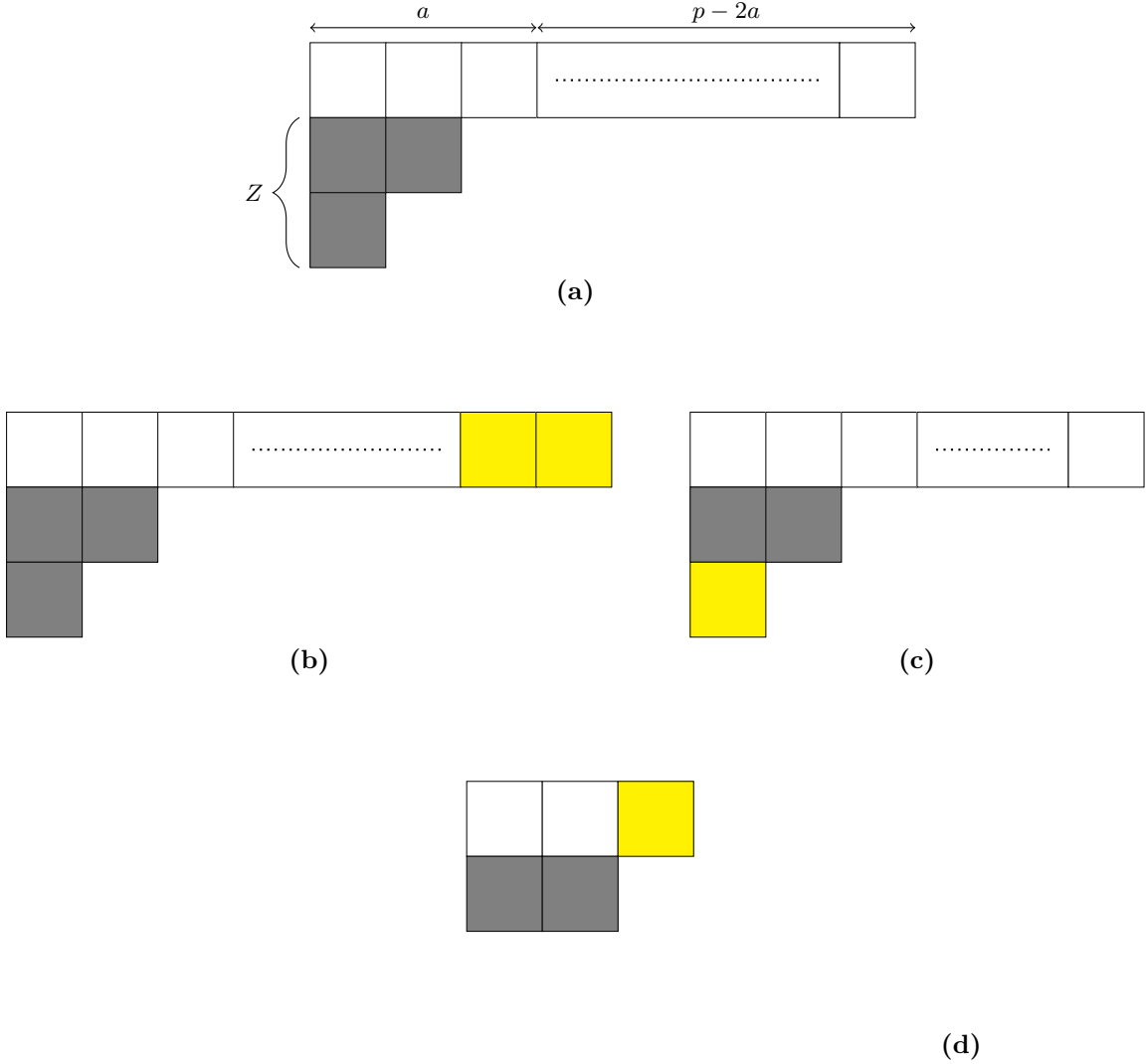
\begin{figure}[htp]
\centering
\subfloat[][]{
\begin{tikzpicture}
\tgyoung(0cm,3cm,;;;_4\hdts;,%
!\gr;;,%
;);
\draw [decorate,decoration={brace,amplitude=10pt},xshift=-4pt,yshift=0pt]
(0,1) -- (0,3) node [black,midway,xshift=-0.6cm] 
{\footnotesize $Z$};
\draw[<->](0,4.2)--(2.99,4.2) node [black,midway,yshift=0.2cm] 
{\footnotesize $a$};
\draw[<->](3.01,4.2)--(8,4.2) node [black,midway,yshift=0.2cm] 
{\footnotesize $p-2a$};
\end{tikzpicture}
\label{Y-start}}
\\
\vspace{1cm}
\subfloat[][]{
\begin{tikzpicture}
\tgyoung(0cm,3cm,;;;_3\hdts!\ylw;;,%
!\gr;;,%
;);
\end{tikzpicture}
\label{deletion-tail}}
\qquad
\subfloat[][]{
\begin{tikzpicture}
\tgyoung(0cm,3cm,;;;_2\hdts;,%
!\gr;;,%
!\ylw;);
\end{tikzpicture}
\label{deletion-bottom}}\\
\vspace{1cm}
\subfloat[][]{
\raisebox{1cm}{
\begin{tikzpicture}
\tgyoung(0cm,3cm,;;!\ylw;,%
!\gr;;,%
);
\end{tikzpicture}}
\label{Y-finish}}
\caption{The Young diagram $Y(\lambda)$ associated to $\lambda$ \protect\subref{Y-start}, the 2 possible types of deletions of the first subsequence \protect\subref{deletion-tail} and \protect\subref{deletion-bottom}, and the first deletion of the second subsequence \protect\subref{Y-finish}.}
\label{rim-hook-deletions}
\end{figure}
 
We get the same result when $\lambda'_1 \geq p - 3$, by considering the first column of $Y(\lambda)$. Hence
\begin{equation*}
\Sigma_2= \BigO{\frac{(\ln{p})^{2D}}{p^{2D-2}}},
\end{equation*}
which finally gives us $\lVert P^{(\mathcal{C}_1, \mathcal{C}_2)}_{\alpha_1 \alpha_2^{-1}} - \mathcal{U}_H \rVert = \BigO{\frac{(\ln{p})^D}{p^{D-1}}}$.
\end{proof}

As mentioned above, some steps of this proof use stronger arguments than in {\cite{chmutov-pittel}}. Indeed, to estimate $\Sigma_1$, we are in the strongest case of application of the bound \eqref{larsen-shalev-bound}, when there is a number $p^{\smallo{1}}$ of cycles under a given size, while the permutations in \cite{chmutov-pittel} have no cycles under a given size. Moreover, when we bound $\Sigma_2$ with a number of sequences of rim-hook deletions, we have to take into account the contribution to this number of rim-hooks of length 1 or 2, which do not appear in {\cite{chmutov-pittel}}, for the same reason.\\

We can now prove \Cref{unif-jacket-faces-law}.
\begin{proof}[of \Cref{unif-jacket-faces-law}]
For $0 \leq l \leq (D+1)p$, we want to estimate the probability
\begin{equation*}
\Prob{F=l} = \sum_{c_{0,1} + c_{1,2} + \dots + c_{D,0} =l} \Prob{\mathcal{O}_{0,1} = c_{0,1} , \mathcal{O}_{1,2} = c_{1,2}, \dots, \mathcal{O}_{D,0} = c_{D,0}}.
\end{equation*}
We decompose the event of having some given $\{c_{i,i+1}, 0 \leq i \leq D \}$ depending on the conjugacy classes $\pi(\alpha_{i,i+1})$ of the $\alpha_{i,i+1}$, for $0 \leq i \leq D-1$:
\begin{equation*}
\Prob{F=l} = \sum_{c_{0,1} + c_{1,2} + \dots + c_{D,0} =l}\ \ \sum_{\substack{\mathcal{C}_{i,i+1}\\ \mathcal{O(C}_{i,i+1})=c_{i,i+1} \\ \text{ for }i=0, \dots, D-1}}  \Prob{\mathcal{O}_{D,0} = c_{D,0} \big\lvert \{ \mathcal{C}_{i,i+1}\}} \prod_{0 \leq i \leq D-1}\Prob{\pi(\alpha_{i,i+1}) = \mathcal{C}_{i,i+1}}.
\end{equation*}
Now, we separate the conjugacy classes into those that satisfy $(*)$ and those that do not:
\begin{align*}
\Prob{F=l}  = & \sum_{c_{0,1} + c_{1,2} + \dots + c_{D,0} =l} \sum_{\substack{\mathcal{O(C}_{i,i+1})=c_{i,i+1} \\ \mathcal{C}_{i,i+1} \text{ satisfies } (*) \, \forall i}}  \Prob{\mathcal{O}_{D,0} = c_{D,0} \big\lvert \{ \mathcal{C}_{i,i+1}\}} \prod_{0 \leq i \leq D-1}\Prob{\pi(\alpha_{i,i+1}) = \mathcal{C}_{i,i+1}}\\
 + &\sum_{c_{0,1} + c_{1,2} + \dots + c_{D,0} =l}\sum_{\substack{\mathcal{O(C}_{i,i+1})=c_{i,i+1} \\ \exists i \, \, \mathcal{C}_{i,i+1} \text{ violating } (*)}}  \Prob{\mathcal{O}_{D,0} = c_{D,0} \big\lvert \{ \mathcal{C}_{i,i+1}\}} \prod_{0 \leq i \leq D-1}\Prob{\pi(\alpha_{i,i+1}) = \mathcal{C}_{i,i+1}}\\
=&: P_1 + P_2.
\end{align*}
From \Cref{unif-asympt-indep}, if all the $\mathcal{C}_{i,i+1}$ satisfy $(*)$, then:
\begin{align*}
&\Prob{\mathcal{O}\left(\alpha_{D,0}\right)=c_{D,0} \, \Big\lvert \, \mathcal{C}(\alpha_{i,i+1})=\mathcal{C}_{i,i+1}, 0 \leq i \leq D-1}\\
&=  \Prob{\mathcal{O}\left(\alpha_{D,0}\right)=c_{D,0}\, \Big\lvert \, \varepsilon(p-c_{D,0})=\varepsilon(\mathcal{C}_{0,1}) \dotsm \varepsilon(\mathcal{C}_{D-1,D})}  + \BigO{\frac{(\ln{p})^D}{p^{D-1}}}\\
&=  2 \cdot \Indic{\varepsilon(p-c_{D,0})=\varepsilon(\mathcal{C}_{0,1}) \dotsm \varepsilon(\mathcal{C}_{D-1,D})} \cdot \Prob{\mathcal{O}\left(\alpha_{D,0}\right)=c_{D,0}} + \BigO{\frac{(\ln{p})^D}{p^{D-1}}}
\end{align*}
where $\varepsilon(n)$ is the parity of $n$. Thus, for $P_1$, we know that the only dependency left in $c_{D,0}$ is a parity condition:
\begin{align*}
&\sum_{c_{0,1} + c_{1,2} + \dots + c_{D,0} =l} \ \  \sum_{\substack{\mathcal{O(C}_{i,i+1})=c_{i,i+1} \\ \mathcal{C}_{i,i+1} \text{ satisfies } (*) \, \forall i}}  \Prob{\mathcal{O}_{D,0} = c_{D,0} \big\lvert \{ \mathcal{C}_{i,i+1}\}} \prod_{0 \leq i \leq D-1}\Prob{\pi(\alpha_{i,i+1}) = \mathcal{C}_{i,i+1}} \\
= & \sum_{c_{0,1} + c_{1,2} + \dots + c_{D,0} =l}   2 \cdot \Indic{p - c_{D,0} \equiv Dp - \sum c_{i,i+1} \, [2]} \\
&\cdot \sum_{\substack{\mathcal{O(C}_{i,i+1})=c_{i,i+1} \\ \mathcal{C}_{i,i+1} \text{ satisfies } (*) \, \forall i}} \left(\Prob{\mathcal{O}_{D,0} = c_{D,0}} +\BigO{\frac{(\ln{p})^D}{p^{D-1}}}\right)\prod_{0 \leq i \leq D-1}\Prob{\pi(\alpha_{i,i+1}) = \mathcal{C}_{i,i+1}}\\
=&\sum_{c_{0,1} + c_{1,2} + \dots + c_{D,0} =l}   2 \cdot \Indic{p - c_{D,0} \equiv Dp - \sum c_{i,i+1} \, [2]}\\
& \cdot \left(\sum_{\substack{\mathcal{O(C}_{i,i+1})=c_{i,i+1} \\ \mathcal{C}_{i,i+1} \text{ satisfies } (*) \, \forall i}} \Prob{\mathcal{O}_{D,0} = c_{D,0}} \prod_{0 \leq i \leq D-1}\Prob{\pi(\alpha_{i,i+1}) = \mathcal{C}_{i,i+1}}\right)\\
& \ \ \ \ +\BigO{\frac{(\ln{p})^D}{p^{D-1}}} . 
\end{align*}

To control $P_2$, we use the fact that for a fixed $l$, the number of cycles of length $l$ in a uniform permutation $\alpha \in \mathfrak{S}_p$ asymptotically follows a Poisson law of parameter $1/l$ (see {\cite{arratia}}).\\
This implies that: $\Prob{\alpha\text{ has more than } \ln{p} \text{ cycles of size 1 and 2}} = \BigO{1/(\ln{p})!}$. Therefore:
\begin{align*}
P_2&=\Prob{F=l \text{ and } \exists i \in \{0,1, \dots, D-1\} \ \ \alpha_{i,i+1} \text{ violates } (*)} \\
&\leq \Prob{\exists i \in \{0,1, \dots, D-1\} \ \ \alpha_{i,i+1} \text{ violates } (*)}=\BigO{\frac{1}{(\ln{p})!}}
\end{align*}
and, similarly,
\begin{multline*}
\sum_{c_{0,1} + c_{1,2} + \dots + c_{D,0} =l} \sum_{\substack{\mathcal{O(C}_{i,i+1})=c_{i,i+1} \\ \exists i \, \, \mathcal{C}_{i,i+1} \text{ violating } (*)}} 2 \cdot \Indic{p - c_{D,0} \equiv Dp - \sum c_{i,i+1} \, [2]}\mathbb{P}(\mathcal{O}_{D,0}= c_{D,0})\cdot\\
\prod_{0 \leq i \leq D-1}\mathbb{P}(\pi(\alpha_{i,i+1}) = \mathcal{C}_{i,i+1})=\BigO{\frac{1}{(\ln{p})!}}.
\end{multline*}
Hence
\begin{multline*}
  \Prob{F=l}= \sum_{c_{0,1} + c_{1,2} + \dots + c_{D,0} =l} 2 \cdot \mathds{1}_{\{p - c_{D,0} \equiv Dp - \sum c_{i,i+1} \, [2]\}}\Prob{\mathcal{O}_{D,0}=c_{D,0}}\cdot\\
  \prod_{0 \leq i \leq D-1}\Prob{\mathcal{O}_{i,i+1} = c_{i,i+1}} + \BigO{\frac{\ln{p}^{D}}{p^{D-1}}}.
  \end{multline*}

Finally, notice that the parity condition on the $c_{i,i+1}$ is: ``$(D+1)p - \sum_{0 \leq i \leq D}c_{i,i+1}$ is even'', that is: ``$(D+1)p-l$ is even''. So
\begin{multline*}
\Prob{F=l}= 2 \cdot \Indic{(D+1)p-l \text{ \scriptsize even}}\sum_{\sum_{0 \leq i \leq D} c_{i,i+1} =l} \Prob{\mathcal{O}_{D,0}=c_{D,0}}\cdot\\
\prod_{0 \leq i \leq D-1}\Prob{\mathcal{O}_{i,i+1} = c_{i,i+1}} + \BigO{\frac{\ln{p}^{D}}{p^{D-1}}}.
\end{multline*}
\end{proof}

We now prove \Cref{unif-jacket-normal-law}:
\begin{proof}[of \Cref{unif-jacket-normal-law}]
We have obtained that:
\begin{equation*}
\lVert P_F - P_{\mathcal{F}} \rVert = \BigO{\frac{(\ln{p})^D}{p^{D-1}}},
\end{equation*}
where $P_{\mathcal{F}}=2\cdot\Indic{(D+1)p-\mathcal{F} \textnormal{ \scriptsize even}}P_{0,1} * P_{1,2} * \dotsm * P_{D,0}$, \emph{i.e.} $\mathcal{F}$ is, up to a parity condition, the sum of $D+1$ i.i.d. variables of law $P_{0,1}$. Therefore, $F$ converges (uniformly) in distribution to $\mathcal{F}$:
\begin{equation*}
\lvert \Phi_F - \Phi_{\mathcal{F}}\rvert = \BigO{\frac{(\ln{p})^D}{p^{D-1}}}
\end{equation*}
where $\Phi_X(a)=\Prob{X \leq a}$, \emph{i.e.} $\Phi_X$ is the cumulative distribution function of $X$.\\

We thus want to show that $\frac{\mathcal{F}- \Expect{\mathcal{F}}}{\sqrt{\Var{\mathcal{F}}}}$ converges weakly to the normal distribution. Now, for a uniform permutation $\sigma \in \mathfrak{S}_p$, $\mathcal{O}(\sigma)$ has the same distribution as a certain sum of independent Bernoulli variables:
\begin{equation*}
\Prob{\mathcal{O}(\sigma)=l}=\Prob{\sum_{1 \leq j \leq p} B_j =l}
\end{equation*}
 where the $B_j$ are independent Bernoulli variables, with $B_j$ of parameter $1/j$. Therefore, $\mathcal{F}$ has the same distribution as $\mathcal{S}\cdot \Indic{(D+1)p-\mathcal{S} \textnormal{ \scriptsize even}}$, with:
\begin{equation*}
\mathcal{S}=\sum_{1 \leq j \leq p }C_j
\end{equation*}
where the $C_j$ are independent binomial variables, with $C_j$ of parameters $(D+1, 1/j)$.\\

Applying the Lindeberg Central Limit Theorem to $\mathcal{S}$, we show that $\frac{\mathcal{S}-\Expect{\mathcal{S}}}{\sqrt{\Var{\mathcal{S}}}}$ converges in distribution to the standard normal law. Then, by applying the Local Limit Theorem (see {\cite[][Theorem 3.1]{mcdonald}}), we show that:
\begin{equation*}
\Prob{\mathcal{S}=l}=\frac{(1+\smallo{1})\exp{\left(\frac{-(l-\Expect{\mathcal{S}})^2}{2\Var{\mathcal{S}}}\right)}}{\sqrt{2\pi\Var{\mathcal{S}}}}
\end{equation*}
uniformly in $l$. Therefore, as
\begin{equation*}
\Prob{\mathcal{F}=l}=2 \cdot \Indic{(D+1)p-l \textnormal{ \scriptsize even}} \cdot \Prob{\mathcal{S}=l},
\end{equation*}
this local limit theorem holds for $\mathcal{F}$ as well.

This implies that $\frac{\mathcal{F}-\Expect{\mathcal{S}}}{\sqrt{\Var{\mathcal{S}}}}$ converges in distribution to the standard normal law, so this is also the case for $F$, and, as $\Expect{F}=\Expect{\mathcal{S}}$ and $\Var{F}=\Var{\mathcal{S}}$, we get the final result.
\end{proof}

\section{Quartic model}
\label{sect-quartic}

The uniform model presented in the previous section is easy to manipulate, but, as we have seen, it yields a relatively unsatisfying geometrical structure. Moreover, it is very far from the distributions arising in colored tensor models, as all of these yield an average number of faces linear with $p$ (see for instance \cite{bonzom-del-riv}), to be compared with the logarithmic behavior of the uniform case, which thus has little physical relevance. These two sources of dissatisfaction are incentives to consider a slightly more complicated model, that we call the \textbf{quartic model}, and whose structure is quite familiar to physicists working on colored tensor models \cite{Dartois2013aa,Delepouve2014ab,Ousmane-Samary2012ab,Rivasseau2016aa}. Note however that the quartic model from the physics literature has degree-dependent weights, whereas ours is uniform on the possible realizations.\\

More precisely, our quartic model can be defined as follows: we consider a bipartite $(D+1)$-colored graph, where the $D$-bubbles without color 0 are all quartic, \textit{i.e.} contain 4 vertices, put into two pairs linked by $D -1$ edges, with the two edges of the remaining color $c$ connecting the pairs (see \Cref{quartic-mel}). The $D$-bubbles without color 0, that we call \boldmath$\hat{0}$\textbf{-bubbles}, are the ``interaction vertices'' of the model from the point of view of quantum field theory, while the 0-edges are its ``propagators''. In each \unboldmath$\hat{0}$-bubble, we want every color $c \in \{1, 2, \dots, D\}$ to be drawn equiprobably as the distinguished color, the bubbles being all independent. We also want the edges of color $0$ to be sampled uniformly and independently of the rest.

\begin{figure}[htp]
\centering
\begin{tikzpicture}
\draw [thick](-1.5,0) to [out=60,in=120](1.5,0);
\draw [thick](-1.5,0) to [out=-60,in=-120](1.5,0);
\draw [thick](-1.5,-3) to [out=60,in=120](1.5,-3);
\draw [thick](-1.5,-3) to [out=-60,in=-120](1.5,-3);
\draw [thick] (-1.5,0) -- (-1.5,-3);
\draw [thick] (1.5,0) -- (1.5,-3);
\draw [fill] (-1.5,0) circle [radius=0.1];
\draw [fill=white,thick] (1.5,0) circle [radius=0.1];
\draw [fill=white,thick] (-1.5,-3) circle [radius=0.1];
\draw [fill] (1.5,-3) circle [radius=0.1];
\node at (-1.75,-1.5) {$c$};
\node at (1.7,-1.5) {$c$};
\node at (-1.9,0) {$2k$};
\node at (1.85,0) {$2k$};
\node at (-2.2,-3) {$2k-1$};
\node at (2.15,-3) {$2k-1$};
\draw [loosely dotted, thick] (0,0.3) -- (0,-0.3);
\draw [loosely dotted, thick] (0,-2.7) -- (0,-3.3);
\end{tikzpicture}
\caption{The structure of each $\hat{0}$-bubble of $Q^{D}_p$.}
\label{quartic-mel}
\end{figure}
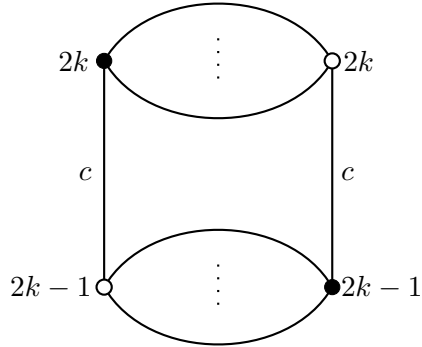

Thus, our random graph $Q^{D}_p$ ($(D+1)$-colored, with $p$ $\hat{0}$-bubbles, hence $4p$ vertices) can be described by a permutation $\alpha_0$ uniform on $\mathfrak{S}_{2p}$, independent from a set of $D$ permutations $\{\alpha_1, \alpha_2, \dots, \alpha_{D}\} \subseteq \mathfrak{S}_{2p}$, each being a product of transpositions of the form $(2k-1 \ \ \ 2k)$, with the constraints:
\begin{align*}
&\forall k \in \{1,2, \dotsc, p \}\  \exists ! i \in \{1, 2, \dots, D \}, \alpha_i(2k-1)=2k\\
& \forall k \in \{1,2, \dotsc, p \}, \forall i \in \{1, 2, \dots, D \},\ \Prob{\alpha_i(2k-1)=2k}=\frac{1}{D}.
\end{align*}

Following the structure of the previous section, we now present results on the connectedness and the degree of $Q^{D}_p$, as $p \to \infty$.

\subsection{Connectedness}
\label{subsect-quartic-cc}

We show that $Q^{D}_p$ is connected a.a.s., very similarly to the uniform case:
\begin{thm}
\label{quartic-connect}
For all $D \geq 2$, one has:
\begin{equation*}
\Prob{Q^{D}_p \text{ connected}}=1 - \frac{1}{2p-1} + \BigO{\frac{1}{p}}.
\end{equation*}
\end{thm} 
\begin{proof}
We use the same arguments as for \Cref{unif-connect}, with the slight difference that we consider the number of $\hat{0}$-bubbles contained in a subgraph, instead of the number of vertices. Thus, for the upper bound on $\Prob{Q^{D}_p \text{ n.c.}}$
\begin{align*}
\Prob{Q^{D}_p \text{ n.c.}} \leq & \sum\limits_{1 \leq k \leq \lfloor \frac{p}{2} \rfloor} \Prob{\exists \text{ closed subgraph with $k$ $\hat{0}$-bubbles}} \\
\leq & \sum\limits_{1 \leq k \leq \lfloor \frac{p}{2} \rfloor}  \begin{pmatrix} p \\ k \end{pmatrix} \frac{(2k)!(2(p-k))!}{(2p)!}= \frac{1}{2p-1} + \BigO{\frac{1}{p^2}},
\end{align*}
and, for the lower bound
\begin{align*}
\Prob{Q^{D}_p \text{ n.c.}} &\geq \Prob{\exists !\text{ isolated $\hat{0}$-bubble}} \\
&= \Prob{\exists \text{ at least 1 isolated bubble}} - \Prob{\exists \text{ at least 2 isolated bubbles}}\\
&\geq \frac{1}{2p-1} - \frac{1}{2(2p-1)(2p-3)} \, .
\end{align*}
\end{proof}

As was the case for the uniform model, the probability of $Q^{D}_p$ having $k$ connected components decreases fast enough with $k$ to get an asympotic expectation value of 1 for its number of connected components:
\begin{thm}
\label{quartic-cc}
For all $D \geq 2$, one has:
\begin{equation*}
\Expect{k(Q^{D}_p)}= 1 + \BigO{\frac{1}{p}}.
\end{equation*}
\end{thm}

\begin{proof}
Calculations similar to those of \Cref{subsection-unif-conn} give:
\begin{equation*}
\Prob{k(Q^{D}_p) \geq 3}=\BigO{\frac{1}{p^2}}, \ \ \Prob{k(Q^{D}_p) \geq 4}=\BigO{\frac{1}{p^3}}.
\end{equation*}
Hence
\begin{align*}
\Expect{k(Q^{D}_p)}&= 1 + \BigO{\frac{1}{p}} + \BigO{\frac{1}{p^2}} + \left(\frac{p(p+1)}{2} - 6\right)\BigO{\frac{1}{p^3}}\\
&= 1 + \BigO{\frac{1}{p}}.
\end{align*}
\end{proof}

If we now move on to the number of $D$-bubbles of $Q^D_p$, we get a picture which is very different from the uniform case. Indeed, since removing from $Q^D_p$ the edges of one given color does not yield $Q^{D-1}_p$, one cannot rely on \Cref{quartic-cc} to infer a result for $b_D(Q^D_p)$, and one must therefore tackle this problem with new tools.\\
Let us first deal with the case $D=2$, which is much simpler than the higher dimensions. As a matter of fact, for $D=2$ removing one color other than 0 (that is, either 1 or 2) yields $2p$ edges linked by a uniform permutation (see \Cref{i-bubbles-D2}), hence:
\begin{equation*}
\Expect{b_D(Q^2_p)}=p + 2(\ln{p} + \gamma) + \smallo{1},
\end{equation*}
using once again our knowledge of the number of cycles of uniform permutations.
\begin{figure}[htp]
\centering
\begin{tikzpicture}
\draw [thick,gray](-1,0) to [out=30,in=150] (1,0);
\draw [thick,gray](-1,-2) to [out=-30,in=-150] (1,-2);
\draw [thick] (-1,0) -- (-1,-2);
\draw [thick] (1,0) -- (1,-2);
\draw [thick](-1,0) -- (-1.25,0.25);
\draw [densely dotted,thick](-1.25,0.25) -- (-1.5,0.5);
\draw [thick](1,0) -- (1.25,0.25);
\draw [densely dotted,thick](1.25,0.25) -- (1.5,0.5);
\draw [thick](-1,-2) -- (-1.25,-2.25);
\draw [densely dotted,thick](-1.25,-2.25) -- (-1.5,-2.5);
\draw [thick](1,-2) -- (1.25,-2.25);
\draw [densely dotted,thick](1.25,-2.25) -- (1.5,-2.5);
\draw [fill] (-1,0) circle [radius=0.1];
\draw [fill=white,thick] (1,0) circle [radius=0.1];
\draw [fill=white,thick] (-1,-2) circle [radius=0.1];
\draw [fill] (1,-2) circle [radius=0.1];
\node at (0,0.5) {\textcolor{gray}{$i$}};
\node at (0,-2.5) {\textcolor{gray}{$i$}};
\node at (-1.4,0.15) {0};
\node at (1.4,0.15) {0};
\node at (-1.4,-2.15) {0};
\node at (1.4,-2.15) {0};
\draw [->, thick] (2.2,-1) -- (3.8,-1);
\draw [thick] (5,0) -- (5,-2);
\draw [thick] (7,0) -- (7,-2);
\draw [thick](5,0) -- (4.75,0.25);
\draw [densely dotted,thick](4.75,0.25) -- (4.5,0.5);
\draw [thick](7,0) -- (7.25,0.25);
\draw [densely dotted,thick](7.25,0.25) -- (7.5,0.5);
\draw [thick](5,-2) -- (4.75,-2.25);
\draw [densely dotted,thick](4.75,-2.25) -- (4.5,-2.5);
\draw [thick](7,-2) -- (7.25,-2.25);
\draw [densely dotted,thick](7.25,-2.25) -- (7.5,-2.5);
\draw [fill] (5,0) circle [radius=0.1];
\draw [fill=white,thick] (7,0) circle [radius=0.1];
\draw [fill=white,thick] (5,-2) circle [radius=0.1];
\draw [fill] (7,-2) circle [radius=0.1];
\node at (4.6,0.15) {0};
\node at (7.4,0.15) {0};
\node at (4.6,-2.15) {0};
\node at (7.4,-2.15) {0};
\end{tikzpicture}
\caption{Removing a color other than 0 in $Q^2_p$.}
\label{i-bubbles-D2}
\end{figure}
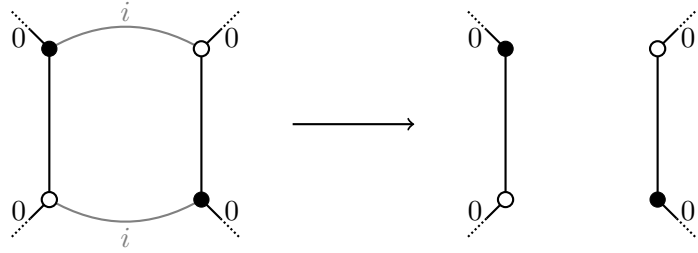\\

To answer our question for the general case $D \geq 3$, let us start with a quick analysis of the structure of $\left(Q^D_p\right)_{\hat{\imath}}$, for $i \in \{0,1, \dots, D\}$:
\begin{itemize}
\item if $i=0$, then we just have $p$ disconnected quartic bubbles; 
\item if $i \neq 0$, then $\left(Q^D_p\right)_{\hat{\imath}}$ is composed of $(D-1)$-melons (coming from the splitting of the $\hat{0}$-bubbles in which $i$ is distinguished) and quartic $(D-1)$-bubbles (coming from the $\hat{0}$-bubbles in which $i$ is not distinguished), completed with edges of color $0$.
\end{itemize}

\begin{figure}[htp]
\centering
\begin{tikzpicture}
\draw [thick](-1,0) to [out=60,in=120](1,0);
\draw [thick](-1,0) to [out=-60,in=-120](1,0);
\draw [thick](-1,-2) to [out=60,in=120](1,-2);
\draw [thick](-1,-2) to [out=-60,in=-120](1,-2);
\draw [thick,gray] (-1,0) -- (-1,-2);
\draw [thick,gray] (1,0) -- (1,-2);
\draw [loosely dotted, thick] (0,0.3) -- (0,-0.3);
\draw [loosely dotted, thick] (0,-1.7) -- (0,-2.3);
\draw [thick](-1,0) -- (-1.25,0.25);
\draw [densely dotted,thick](-1.25,0.25) -- (-1.5,0.5);
\draw [thick](1,0) -- (1.25,0.25);
\draw [densely dotted,thick](1.25,0.25) -- (1.5,0.5);
\draw [thick](-1,-2) -- (-1.25,-2.25);
\draw [densely dotted,thick](-1.25,-2.25) -- (-1.5,-2.5);
\draw [thick](1,-2) -- (1.25,-2.25);
\draw [densely dotted,thick](1.25,-2.25) -- (1.5,-2.5);
\draw [fill] (-1,0) circle [radius=0.1];
\draw [fill=white,thick] (1,0) circle [radius=0.1];
\draw [fill=white,thick] (-1,-2) circle [radius=0.1];
\draw [fill] (1,-2) circle [radius=0.1];
\node at (-1.25,-1) {\textcolor{gray}{$i$}};
\node at (1.2,-1) {\textcolor{gray}{$i$}};
\node at (-1.4,0.15) {0};
\node at (1.4,0.15) {0};
\node at (-1.4,-2.15) {0};
\node at (1.4,-2.15) {0};
\draw [thick](-1,-3.5) to [out=60,in=120](1,-3.5);
\draw [thick](-1,-3.5) to [out=-45,in=-135](1,-3.5);
\draw [thick,gray](-1,-3.5) to [out=-85,in=-95](1,-3.5);
\draw [thick](-1,-5.5) to [out=60,in=120](1,-5.5);
\draw [thick](-1,-5.5) to [out=-45,in=-135](1,-5.5);
\draw [thick,gray](-1,-5.5) to [out=-85,in=-95](1,-5.5);
\draw [thick] (-1,-3.5) -- (-1,-5.5);
\draw [thick] (1,-3.5) -- (1,-5.5);
\draw [loosely dotted, thick] (0,-3.2) -- (0,-3.8);
\draw [loosely dotted, thick] (0,-5.2) -- (0,-5.8);
\draw [thick](-1,-3.5) -- (-1.25,-3.25);
\draw [densely dotted,thick](-1.25,-3.25) -- (-1.5,-3);
\draw [thick](1,-3.5) -- (1.25,-3.25);
\draw [densely dotted,thick](1.25,-3.25) -- (1.5,-3);
\draw [thick](-1,-5.5) -- (-1.25,-5.75);
\draw [densely dotted,thick](-1.25,-5.75) -- (-1.5,-6);
\draw [thick](1,-5.5) -- (1.25,-5.75);
\draw [densely dotted,thick](1.25,-5.75) -- (1.5,-6);
\draw [fill] (-1,-3.5) circle [radius=0.1];
\draw [fill=white,thick] (1,-3.5) circle [radius=0.1];
\draw [fill=white,thick] (-1,-5.5) circle [radius=0.1];
\draw [fill] (1,-5.5) circle [radius=0.1];
\node at (0,-4.3) {\textcolor{gray}{$i$}};
\node at (0,-6.3) {\textcolor{gray}{$i$}};
\node at (-1.4,-3.35) {0};
\node at (1.4,-3.35) {0};
\node at (-1.4,-5.65) {0};
\node at (1.4,-5.65) {0};
\draw [->, thick] (2.2,-3) -- (3.8,-3);
\draw [thick](5,0) to [out=60,in=120](7,0);
\draw [thick](5,0) to [out=-60,in=-120](7,0);
\draw [thick](5,-2) to [out=60,in=120](7,-2);
\draw [thick](5,-2) to [out=-60,in=-120](7,-2);
\draw [loosely dotted, thick] (6,0.3) -- (6,-0.3);
\draw [loosely dotted, thick] (6,-1.7) -- (6,-2.3);
\draw [thick](5,0) -- (4.75,0.25);
\draw [densely dotted,thick](4.75,0.25) -- (4.5,0.5);
\draw [thick](7,0) -- (7.25,0.25);
\draw [densely dotted,thick](7.25,0.25) -- (7.5,0.5);
\draw [thick](5,-2) -- (4.75,-2.25);
\draw [densely dotted,thick](4.75,-2.25) -- (4.5,-2.5);
\draw [thick](7,-2) -- (7.25,-2.25);
\draw [densely dotted,thick](7.25,-2.25) -- (7.5,-2.5);
\draw [fill] (5,0) circle [radius=0.1];
\draw [fill=white,thick] (7,0) circle [radius=0.1];
\draw [fill=white,thick] (5,-2) circle [radius=0.1];
\draw [fill] (7,-2) circle [radius=0.1];
\node at (4.6,0.15) {0};
\node at (7.4,0.15) {0};
\node at (4.6,-2.15) {0};
\node at (7.4,-2.15) {0};
\draw [thick](5,-3.5) to [out=60,in=120](7,-3.5);
\draw [thick](5,-3.5) to [out=-45,in=-135](7,-3.5);
\draw [thick](5,-5.5) to [out=60,in=120](7,-5.5);
\draw [thick](5,-5.5) to [out=-45,in=-135](7,-5.5);
\draw [thick] (5,-3.5) -- (5,-5.5);
\draw [thick] (7,-3.5) -- (7,-5.5);
\draw [loosely dotted, thick] (6,-3.2) -- (6,-3.8);
\draw [loosely dotted, thick] (6,-5.2) -- (6,-5.8);
\draw [thick](5,-3.5) -- (4.75,-3.25);
\draw [densely dotted,thick](4.75,-3.25) -- (4.5,-3);
\draw [thick](7,-3.5) -- (7.25,-3.25);
\draw [densely dotted,thick](7.25,-3.25) -- (7.5,-3);
\draw [thick](5,-5.5) -- (4.75,-5.75);
\draw [densely dotted,thick](4.75,-5.75) -- (4.5,-6);
\draw [thick](7,-5.5) -- (7.25,-5.75);
\draw [densely dotted,thick](7.25,-5.75) -- (7.5,-6);
\draw [fill] (5,-3.5) circle [radius=0.1];
\draw [fill=white,thick] (7,-3.5) circle [radius=0.1];
\draw [fill=white,thick] (5,-5.5) circle [radius=0.1];
\draw [fill] (7,-5.5) circle [radius=0.1];
\node at (4.6,-3.35) {0};
\node at (7.4,-3.35) {0};
\node at (4.6,-5.65) {0};
\node at (7.4,-5.65) {0};
\draw [->, thick] (8.2,-3) -- (9.8,-3);
\draw[->-=.5,thick] (12,0) -- (11,0);
\draw [densely dotted,thick](11,0) -- (10.5,0);
\draw[->-=.5,thick] (13,0) -- (12,0);
\draw [densely dotted,thick](13,0) -- (13.5,0);
\draw[->-=.5,thick] (11,-2) -- (12,-2);
\draw [densely dotted,thick](11,-2) -- (10.5,-2);
\draw[->-=.5,thick] (12,-2) -- (13,-2);
\draw [densely dotted,thick](13,-2) -- (13.5,-2);
\draw [fill] (12,0) circle [radius=0.05];
\draw [fill] (12,-2) circle [radius=0.05];
\draw[->-=.5,thick] (12,-4.5) -- (11,-3.5);
\draw [densely dotted,thick](11,-3.5) -- (10.5,-3);
\draw[->-=.5,thick] (13,-3.5) -- (12,-4.5);
\draw [densely dotted,thick](13,-3.5) -- (13.5,-3);
\draw[->-=.5,thick] (12,-4.5) -- (13,-5.5);
\draw [densely dotted,thick](13,-5.5) -- (13.5,-6);
\draw[->-=.5,thick] (11,-5.5) -- (12,-4.5);
\draw [densely dotted,thick](11,-5.5) -- (10.5,-6);
\draw [fill] (12,-4.5) circle [radius=0.05];
\end{tikzpicture}
\caption{Going from $Q^D_p$ to $\left(Q^D_p\right)_{\hat{\imath}}$, and from $\left(Q^D_p\right)_{\hat{\imath}}$ to $S^{D,i}_p$.}
\label{i-bubbles}
\end{figure}

For $i \neq 0$, as we are here only interested in the connected components of $\left(Q^D_p\right)_{\hat{\imath}}$, we can study a simpler (directed) graph $S^{D,i}_p$, which possesses vertices in lieu of quartic bubbles and melons, and whose edges correspond to the $0$-edges of $\left(Q^D_p\right)_{\hat{\imath}}$.\\
More precisely, the quartic bubbles of $\left(Q^D_p\right)_{\hat{\imath}}$ are represented in $S^{D,i}_p$ by vertices with in-degree and out-degree of 2, and its melons, by vertices with in-degree and out-degree of 1. We can decide for instance that each white (resp. black) vertex gives rise to an in- (resp. out-)half edge, and thus the half edges of $S^{D,i}_p$ unambiguously inherit a labelling. The uniformity of $\alpha_0$ is then translated into a uniform matching of the in- and out-half edges. This construction, illustrated in \Cref{i-bubbles}, clearly preserves the connected components of $\left(Q^D_p\right)_{\hat{\imath}}$.\\
Crucially, $S^{D,i}_p$ is a very special case of the (directed) configuration model, and general results by Cooper and Frieze {\cite{cooper-frieze}}, combined with the recent ones of Federico and van der Hofstad {\cite{f-vdh}}, allow us to obtain the following asymptotic results for the connected components of $\left(Q^D_p\right)_{\hat{\imath}}$: 

\begin{thm}
\label{quartic-i-bubbles}
For $D \geq 3$, for any $i \in \{1, 2, \dots, D\}$, $Q^D_p$ has a giant $\hat{\imath}$-bubble containing $4p - \BigO{\sqrt{p \ln{p}}}$ vertices. Moreover, the expectation value of the number of $\hat{\imath}$-bubbles of $Q^D_p$ is:
\begin{equation*}
\Expect{k(S^{D,i}_p)}= \ln{\left(\frac{D}{D - 1}\right)} +1 +o(1).
\end{equation*}
\end{thm}
We therefore get, for the total number of $D$-bubbles of $Q^D_p$:
\begin{cor}
\label{quartic-D-bubbles}
For $D \geq 3$:
\begin{equation*}
\Expect{b_D(Q^D_p)}= p + D\left(\ln{\left(\frac{D}{D - 1}\right)} +1\right) +o(1).
\end{equation*}
\end{cor}

As stated above, to prove \Cref{quartic-i-bubbles}, we will make use of results on the directed configuration model, that we define now. The directed configuration model describes a random digraph $D_n$ with $n$ vertices, among which, for $i,j \geq 0$, $l_{i,j}$ have in-degree $i$ and out-degree $j$. $D_n$ is obtained from these vertices by taking each matching of the in- and out-half-edges equiprobably. For $D_n$ to be well-defined,there must be an equal number of in- and out-half edges, \emph{i.e.}: $\sum_{i,j}il_{i,j} = \sum_{i,j}jl_{i,j}=:\theta \cdot n$. The average directed degree in $D_n$ is then $d:=\sum_{i,j}ij\frac{l_{i,j}}{\theta n}$.\\

$S^{D,i}_p$ is an example of a directed configuration model, with $q=2q_1 + q_2$ vertices, where $q_1$ is the number $\hat{0}$-bubbles that were split by removing $i$-edges, and $q_2= p - q_1$ is the number of bubbles that were not split. Now, by construction, $q_1$ is the sum of $p$ independent Bernoulli variables of parameter $1/D$, so, from the Hoeffding inequality:
\begin{equation*}
\Prob{\Big\lvert \frac{q_1}{p} - \frac{1}{D} \Big\rvert \geq \sqrt{\frac{\ln{p}}{p}}} \leq \frac{2}{p^2}.
\end{equation*}
Thus, a.a.s. $q_1 \sim p/D$, and in that case $l_{1,1}=\frac{2p}{D}(1 + \smallo{1})$, $l_{2,2}=\frac{(D-1)p}{D}(1 + \smallo{1})$, $\theta =\frac{2p}{q}=\frac{2D}{D+1}(1 + \smallo{1})$, and $d=\frac{2q_1 +4 q_2}{2p}=\left(\frac{2D-1}{D}\right)(1 + \smallo{1})$.
\\

To prove \Cref{quartic-i-bubbles}, we use a result proved in {\cite{cooper-frieze}}, which implies that $S^{D,i}_p$ a.a.s. contains one giant connected component of size of order $q$, as well as smaller cycles. Then, we will adapt a result from {\cite{f-vdh}} to know the number of those cycles.

\begin{thm}
\label{cooper-frieze-thm}
\textnormal{\cite{cooper-frieze}}\\
Let $D_n$ be a configuration model digraph defined by a sequence $(l_{i,j})$, satisfying:
\begin{enumerate}
\item the degrees in $D_n$ are bounded, \emph{i.e.} $\exists \, i_m,j_m$ such that $i > i_m$ or $j > j_m \Rightarrow l_{i,j} =0$
\item \label{theta0-d0} $\exists$ constants $\theta_0, d_0$ such that: $\theta=\theta_0(1+\smallo{1})$ and $d=d_0(1+\smallo{1})$, with $d_0 > 1$
\item $\forall \, i \ \ l_{i,0}=0$ and  $\forall \, j \ \ l_{0,j}=0$.
\end{enumerate}
Then a.a.s. the structure of $D_n$ is as follows:
\begin{enumerate}
\item There is a unique giant strongly connected component $S$ in $D_n$, of size $n - \BigO{\sqrt{n\ln{n}}}$.
\item There is a collection $C$ of vertex-disjoint directed cycles. The vertices of a cycle in $C$ are all of in-degree 1 or all of out-degree 1. Moreover, each cycle in $C$ is connected to $S$ by zero or more directed paths, all such paths having the same direction with respect to the given cycle.
\end{enumerate}
\end{thm}

To understand \Cref{cooper-frieze-thm}, it is important to note that it deals with \emph{strongly} connected components of $D_n$, \emph{i.e.} subgraphs of $D_n$ in which any two vertices can be joined by directed paths in both directions. Thus, the cycles in $C$ can be connected to $S$ by paths, but not strongly connected.\\

From our analysis of $S^{D,i}_p$, \Cref{cooper-frieze-thm} applies to our case, as only $l_{1,1}$ and $l_{2,2}$ are non-zero, $\theta \to \frac{2D}{D+1}$, and $d\to 2 - \frac{1}{D}$. Moreover, as all the vertices of $S^{D,i}_p$ have same in-degree and out-degree, its strongly connected components are the same as its merely connected ones. Therefore, $S^{D,i}_p$ is comprised of one giant connected component, together with a collection of small cycles. To obtain the number of connected components of $S^{D,i}_p$, we thus have to determine its number of cycles. To do so, we adapt a result from \cite{f-vdh} on the undirected configuration model, to the directed case:

\begin{prop}
\label{cycle-number-config-model}
Let $D_n$ be a configuration model digraph defined by a sequence $(l_{i,j})$, satisfying the hypothesis (\ref{theta0-d0}) from \Cref{cooper-frieze-thm}, and such that: $\frac{l_{1,1}}{n} \xrightarrow[p \to \infty]{} p_{1,1} \in [0,1]$. Let $C_k(n)$ be the number of cycles (composed of vertices of in- and out-degree 1) of length $k$ in $D_n$. Then:
\begin{equation*}
\left(C_k(n)\right)_{k \geq 1} \xrightarrow[]{(d)} \left(C_k\right)_{k \geq 1}
\end{equation*} 
where the $C_k$ are independent Poisson variables, of respective parameter $\frac{p_{1,1}^k}{k \theta_0 ^k}$.
\end{prop}

To prove \Cref{cycle-number-config-model}, we use the following \nameCref{multivariate-poisson}, similarly to the proof of Theorem 3.3 in {\cite{f-vdh}}:
\begin{lemma}
\label{multivariate-poisson}
\textnormal{\cite{vdh}} A sequence of vectors of non-negative integer-valued random variables $(X^{(n)}_1,X^{(n)}_2, \dots, X^{(n)}_k)_{n \geq 1}$ converges in distribution to a vector of independent Poisson random variables with parameters $(\lambda_1, \lambda_2, \dots, \lambda_k)$ when, for all possible choices of $(r_1, r_2, \dots, r_k) \in \mathbb{N}^k$:
\begin{equation*}
\lim_{n \to \infty}\Expect{(X_1^{(n)})_{r_1}(X_2^{(n)})_{r_2} \dotsm (X_k^{(n)})_{r_k}}=\prod_{i=1}^{k}\lambda_{i}^{r_{i}}
\end{equation*}
where $(X)_{r}=X(X-1) \dotsm (X -r +1)$.
\end{lemma}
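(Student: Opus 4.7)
The plan is the classical method of (factorial) moments adapted to the multivariate setting: establish tightness, pass to weak subsequential limits, transfer the joint factorial moments to each such limit through a uniform integrability argument, and conclude by the fact that Poisson laws are determined by their moments.

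First, specializing the hypothesis to the multi-index $(r_1,\ldots,r_k)=(0,\ldots,0,1,0,\ldots,0)$ with the $1$ in slot $j$ gives $\Expect{X_j^{(n)}}\to\lambda_j$ for each $j$. Markov's inequality then makes the $\mathbb{Z}_{\geq 0}^k$-valued sequence $(X_1^{(n)},\ldots,X_k^{(n)})_n$ tight, and Prokhorov's theorem provides, along any subsequence, a further subsequence converging weakly to some non-negative integer-valued random vector $Y=(Y_1,\ldots,Y_k)$ (the possibility of escape to $+\infty$ is ruled out by the bounded first moments).

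Second, I fix a multi-index $(r_1,\ldots,r_k)$ and put $Z_n=\prod_i (X_i^{(n)})_{r_i}$. Ordinary powers expand as linear combinations of falling factorials via Stirling numbers of the second kind, so applying the hypothesis with multi-index $(2r_1,\ldots,2r_k)$ yields $\sup_n\Expect{\prod_i (X_i^{(n)})^{2r_i}}<\infty$; the generalized Hölder inequality then gives $\sup_n\Expect{Z_n^2}<\infty$, so $\{Z_n\}_n$ is uniformly integrable. Since the map $(x_1,\ldots,x_k)\mapsto\prod_i (x_i)_{r_i}$ is continuous on $\mathbb{Z}_{\geq 0}^k$, combining uniform integrability with weak convergence along the chosen subsequence yields
\begin{equation*}
\Expect{\prod_i (Y_i)_{r_i}}=\lim_{n\to\infty}\Expect{\prod_i (X_i^{(n)})_{r_i}}=\prod_i\lambda_i^{r_i}.
\end{equation*}

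Third, these are exactly the joint factorial moments of a vector $(P_1,\ldots,P_k)$ of independent Poisson variables with parameters $(\lambda_i)$. The one-dimensional Poisson law is determined by its moments (its moment generating function is entire), so each marginal of $Y$ equals $\mathrm{Poisson}(\lambda_j)$; the factorization of joint factorial moments across coordinates forces the coordinates of $Y$ to be independent as well. Every subsequential weak limit therefore coincides with this Poisson vector, so by the standard subsequence argument the full sequence converges in distribution to it.

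The main obstacle is the uniform integrability step. It relies crucially on the assumption that \emph{every} joint factorial moment converges (and not merely those of a given order), as this is what controls $\Expect{Z_n^2}$ and hence allows us to pass to the limit inside the expectation. Once that technical point is dispatched, the rest of the argument is the standard tightness-plus-moment-determinacy scheme.
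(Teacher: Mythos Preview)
Your proof is correct and follows the standard method-of-moments route (tightness, uniform integrability, moment determinacy of the Poisson law). Note, however, that the paper does not actually prove this lemma: it is quoted from \cite{vdh} and used as a black box in the proof of \Cref{cycle-number-config-model}. There is therefore no argument in the paper to compare yours against.

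Two minor remarks on presentation. The appeal to ``generalized H\"older'' in your uniform-integrability step is superfluous: for non-negative integers one has $(x)_r\leq x^r$ pointwise, so $Z_n^2\leq\prod_i (X_i^{(n)})^{2r_i}$ directly, and the Stirling-number expansion you already invoked bounds the right-hand side. Second, the clause ``the factorization of joint factorial moments forces independence'' deserves one more line: the cleanest justification is that the factorial moments $\prod_i\lambda_i^{r_i}$ sum to the joint probability generating function $\prod_i e^{\lambda_i(z_i-1)}$, which converges everywhere and is exactly the PGF of independent Poissons, hence determines the law of $Y$ on $\mathbb{Z}_{\geq 0}^k$.
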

\begin{proof}[of \Cref{cycle-number-config-model}]
From \Cref{multivariate-poisson}, it suffices to show that, for all $k \geq 1$ and all possible choices of $(r_1,r_2, \dots, r_k) \in \mathbb{N}^k$:
\begin{equation*}
\lim_{n \to \infty}\Expect{(C_1(n))_{r_1}(C_2(n))_{r_2} \dotsm (C_k(n))_{r_k}}=\left(\frac{p_{1,1}}{\theta_0}\right)^{r_1+2r_2 + \dotsm + kr_k} \frac{1}{2^{r_2} \dotsm k^{r_k}}.
\end{equation*}
We will proceed by induction on $k$. We first define $\mathscr{C}_k$ the set of candidates for cycles of length $k$ in $D_n$, \emph{i.e.} $\mathscr{C}_k=\{\{v_1, v_2, \dots, v_k\} \lvert v_i \text{ has in- and out-degree } 1\}$, so that:
\begin{equation*}
C_k(n)=\sum_{c \in \mathscr{C}_k} \Indic{c \text{ \scriptsize is in } D_n }.
\end{equation*}

This formulation highlights the fact that we are dealing with the factorial moments of sums of indicators, which implies that (see {\cite[][Section 2.1]{vdh}}):
\begin{multline}
\label{fact-moments-indic-sum}
\Expect{(C_1(n))_{r_1}(C_2(n))_{r_2} \dotsm (C_k(n))_{r_k}}= \\
\sum_{\substack{c_1(1), \dots, c_1(r_1) \in \mathscr{C}_1 \\ \text{distinct}}} \dotsm \sum_{\substack{c_k(1), \dots, c_k(r_k) \in \mathscr{C}_k\\ \text{distinct} }}\Prob{c_i(s) \text{ is in } D_n,\ \forall \, i=1, \dots, k,\ \forall s=1, \dots, r_k }.
\end{multline}

We note $W_k(\overrightarrow{r})$ the set of candidates for a collection of cycles like in \Cref{fact-moments-indic-sum}: $W_k(\overrightarrow{r}):=\{ \{c_1(1), \dots, c_1(r_1), \dots, c_k(1), \dots, c_k(r_k)\} \, \lvert \, c_i(j) \in \mathscr{C}_i, \, 1 \leq i \leq k, \, 1 \leq j \leq r_i \text{, the } c_i(s) \text{ are distinct}\}$, and for $w \in W_k(\overrightarrow{r})$ we note $\mathscr{E}(w)$ the event that all the cycles of $w$ are in $D_n$. Then
\begin{align*}
&\Expect{(C_1(n))_{r_1}(C_2(n))_{r_2} \dotsm (C_k(n))_{r_k}}\\
&=\sum_{w_k \in W_k(\overrightarrow{r})} \Prob{\mathscr{E}(w_k)}\\
&=\sum_{w_{k-1} \in W_{k-1}(\overrightarrow{r})} \Prob{\mathscr{E}(w_{k-1})}\underbrace{\sum_{\substack{c_1, \dots, c_{r_k} \in \mathscr{C}_k \\ \text{distinct}}} \Expect{\Indic{c_1 \text{ \scriptsize in } D_n } \dotsm \Indic{ c_{r_k} \text{ \scriptsize in } D_n }  \lvert \mathscr{E}(w_{k-1})}}_{S_k}.
\end{align*}

We now calculate $S_k$. It can be decomposed into the possible choices of cycles, that is, choices of collections of vertices of in- and out-degree of 1, multiplied by the probability of having those cycles in $D_n$. The choice of vertices is only constrained by the vertices already appearing in cycles of $w_k$, as those cannot be chosen. As for the probability of the cycles, they correspond to successive restrictions on the uniform permutation representing the matchings of the half-edges. Thus:
\begin{equation*}
S_k= \frac{(l_{1,1} - a_{1,1})!}{(l_{1,1} - a_{1,1}-kr_k)!(k!)^{r_k}} \cdot \frac{((k-1)!)^{r_k}(\theta n - a_{1,1}-k)!}{(\theta n - a_{1,1})!}
\end{equation*}
where $a_{1,1}$ is the number of vertices appearing in $w_{k-1}$. Therefore, for fixed $k$,
\begin{equation*}
S_k=\frac{l_{1,1}^{kr_k}}{k^{r_k}(\theta n)^{kr_k}}(1 + \smallo{1}) = \left(\frac{p_{1,1}^k}{k\theta_0^k}\right)^{r_k}(1 + \smallo{1}),
\end{equation*}
\emph{i.e.}
\begin{multline*}
\Expect{(C_1(n))_{r_1}(C_2(n))_{r_2} \dotsm (C_k(n))_{r_k}}=\\
\Expect{(C_1(n))_{r_1}(C_2(n))_{r_2} \dotsm (C_{k-1}(n))_{r_{k-1}}} \cdot\left(\frac{p_{1,1}^k}{k\theta_0^k}\right)^{r_k}(1 + \smallo{1}).
\end{multline*}
\end{proof}

From \Cref{cooper-frieze-thm} and \Cref{cycle-number-config-model}, we deduce that the number of connected components of $S^{D,i}_p$ has an expectation value of:
\begin{equation*}
\Expect{k(S^{D,i}_p)}= 1 + \sum_{k \geq 1}\Expect{C_k(q)}_{\Big\lvert \lvert q - \frac{D+1}{D}p\rvert = \BigO{\sqrt{p\ln{p}}}} + \BigO{p \cdot \frac{1}{p^2}} = 1 + \sum_{k \geq 1}\frac{1}{kD^k} + \smallo{1},
\end{equation*}
\emph{i.e.}
\begin{equation*}
\Expect{k(S^{D,i}_p)}= 1 + \ln{\left(\frac{D}{D-1}\right)} + \smallo{1}
\end{equation*}
which gives \Cref{quartic-i-bubbles}, and therefore \Cref{quartic-D-bubbles}.

\subsection{Geometry of the complex}

With a view towards scaling limits, it is encouraging that the number of points in the complex $\Delta(Q^D_p)$ grows linearly with $p$, as stated in \Cref{quartic-D-bubbles}. However, to make it possible to hope for a continuum scaling limit, the typical distances in $\Delta(Q^D_p)$ must grow to infinity with $p$. This is why we investigate the behavior of distances in this complex.\\

We show that, unfortunately, the average distance between two points of the complex is bounded, and more precisely, as stated in \Cref{quartic-avg-dist}, equal to 2. This can be predicted by considering the  ``typical landscape'' of the complex. Indeed, there always are $p$ $0$-points, while the number of $i$-points, for $i \neq 0$, grows sublinearly with $p$, as a consequence of \Cref{cooper-frieze-thm}. Thus, a uniform point is a.a.s. of color 0. Moreover, \Cref{cooper-frieze-thm} also implies that, for $i \neq 0$, there is a ``hub'' $i$-point corresponding to the giant component of $S^{D,i}_p$, which is linked to most of the $0$-points, and much more isolated $i$-points, corresponding to the small cycles of $S^{D,i}_p$. Hence, typically two independent and uniform points of  $\Delta(Q^D_p)$ will be of color $0$, and will have the ``hub'' $i$-points, for $i \neq 0$, as common neighbors. We formalize this heuristic in the proof of the following theorem:

\begin{thm}
\label{quartic-avg-dist}
Let $u,v$ be two vertices of $\Delta(Q^D_p)$ chosen uniformly at random and independently. Then, a.a.s.:
\begin{equation*}
d(u,v)=2.
\end{equation*}
\end{thm}

\begin{proof}
To prove this result, we show that a.a.s. $u$ and $v$ are points of color 0, and have a common neighbor of color $i \neq 0$. The fact that they are 0-colored a.a.s. is a simple consequence of \Cref{cooper-frieze-thm}, and more precisely the fact that a.a.s. the giant component of $S^{D,i}_p$ has size $q - \BigO{\sqrt{q\ln{q}}}$ (where $q$ is the number of vertices of $S^{D,i}_p$), so that a.a.s. there are less than $\BigO{\sqrt{p\ln{p}}}$ $\hat{\imath}$-bubbles for $i \neq 0$, while there are $p$ $\hat{0}$-bubbles.

It now remains to show that $u$ and $v$ have a common neighbor. To do so, let us consider the $D$-bubbles $\mathcal{B}_u, \mathcal{B}_v$ of $Q^D_p$ they respectively correspond to. For any color $i \neq 0$, $\mathcal{B}_u$ corresponds to either 1 or 2 vertices in $S^{D,i}_p$, depending on whether $i$ is the distinguished color in $\mathcal{B}_u$ or not. If $\mathcal{B}_u$ corresponds to a single vertex $u \in S^{D,i}_p$, then conditionally to the existence of a giant component like in \Cref{cooper-frieze-thm}, $u$ is necessarily in that giant component, as it is a quartic vertex. Now, conditionally to the fact that $\mathcal{B}_u$ corresponds to 2 vertices $w, x \in S^{D,i}_p$, both $w$ and $x$ are uniform on the quadratic vertices of $S^{D,i}_p$, and are thus a.a.s. in the giant component of $S^{D,i}_p$, from \Cref{cooper-frieze-thm}. Therefore, a.a.s. $\mathcal{B}_u$ and the $\hat{\imath}$-bubble $\mathcal{B}_{g,i}$ corresponding to the giant component of $S^{D,i}_p$ have an $(\hat{\imath},\hat{0})$-bubble in common. In terms of simplices of $\Delta(Q^D_p)$, this means that a.a.s., a 1-simplex of the complex links $u$ to the vertex $t_{g,i}$ corresponding to $\mathcal{B}_{g,i}$, \emph{i.e.} that these two vertices are nearest neighbors. Following the same reasoning for $v$, a.a.s. it is also a direct neighbor of $t_{g,i}$, so that $u$ and $v$ are at distance 2 in $\Delta(Q^D_p)$.
\end{proof}

\subsection{Degree}
\label{subsection-quartic-degree}

We now look into the number of faces of $Q^D_p$. Like for the uniform model, we present results on the expectation value and the variance of the total number of faces, as well as one on the asymptotic behavior of the number of faces of a given jacket. As we will see, upon proving these results we also obtain the asymptotic behavior of the genus of a uniform map of size $p$, as $p \to \infty$ (see \Cref{rib-graph-jack}).
\begin{thmns}[Number of faces of $Q_{p}^{D}$]
\label{quartic-face-moments}
\begin{align}
\label{quartic-face-mean} \Expect{[b_2(Q^D_p)} & = (D-1)^2p + D(\ln{(2p)}+\gamma) +\smallo{1}\\
\label{quartic-face-var} \Var{b_2(Q^D_p)} & =  \BigO{(\ln{p})^3}.
\end{align}
\end{thmns}
We also get a normal limit for the number of faces of one jacket:
\begin{thm}
\label{quartic-jacket-normal-law}
Let $\mathcal{J}_p$ be a regular embedding of $Q^D_p$, and $F_{\mathcal{J}_p}$ be the number of faces of $\mathcal{J}_p$. Then the quantity $ \frac{F_{\mathcal{J}_p} - \Expect{F_{\mathcal{J}_p}}}{\sqrt{\Var{F_{\mathcal{J}_p}}}}$ converges weakly to the standard normal distribution.  
\end{thm}

To prove both \Cref{quartic-face-moments,quartic-jacket-normal-law}, we will need to determine the asymptotical behavior of the joint law of $\mathcal{O}(\alpha_0 \alpha_i^{-1}), \mathcal{O}(\alpha_j \alpha_i^{-1}), \mathcal{O}(\alpha_j \alpha_0^{-1})$, for $1 \leq i < j \leq D$. By conjugating all permutations by a uniform permutation $\beta \in \mathfrak{S}_{2p}$, independent from the rest, this amounts to answering the same question for $\mathcal{O}(\varphi), \mathcal{O}(\alpha), \mathcal{O}(\alpha \varphi^{-1})$, for $\alpha, \varphi$ two independent permutations in $\mathfrak{S}_{2p}$, with $\varphi$ uniform, and $\alpha$ an involution with $n=2b$ fixed points, where $b$ is the sum of $p$ independent Bernoulli variables of parameter $\frac{D-2}{D}$. This slight change of setting leads to considering the following result:

\begin{prop}
\label{quartic-simpl-asympt}
Let $p$ be a positive integer, and $0 \leq b \leq p$ fixed. Let $\alpha$ be uniform on the conjugacy class of involutions on $\{1, \dots, 2p \}$ with $2b$ fixed points, and let $\varphi$ be a uniform permutation on $\{1, \dots, 2p \}$, independent from $\alpha$. Then $(\mathcal{O}(\varphi)+\mathcal{O}(\alpha \varphi^{-1}))$ has the same law as $(\mathcal{O}(\psi) + \mathcal{O}(\delta \psi^{-1}))$, where $\delta$ is uniform on the fixed-point-free involutions on $\{1, \dots, 2(p-b)\}$, and $\psi$ is uniform on $\mathfrak{S}_{2(p-b)}$, and independent from $\delta$.
\end{prop}

\begin{proof}
The key idea of the proof is that $\alpha, \varphi, \alpha \varphi^{-1}$ respectively describe the (half-)edges, faces and vertices, of an (half-edge) labelled ribbon graph $G$ with $(p-b)$ edges and $2b$ unmatched half-edges. By erasing the unmatched half-edges of $G$, we obtain a ribbon graph $H$ with $(p-b)$ edges.

As $\alpha$ and $\varphi$ are independent and uniform on their respective sets of possible realizations, $G$ is uniform on the set of labelled ribbon graphs with $(p-b)$ edges and $2b$ unmatched half-edges. Therefore, if we relabel the half-edges of $H$, by keeping their order, so that the labels are in $\{1, 2, \dots, 2(p-b)\}$, we get a ribbon graph $H'$, uniform on the labelled ribbon graphs with $(p-b)$ edges. $H'$ can thus be identified to a triple $(\delta, \psi, \delta \psi^{-1})$ satisfying the hypotheses mentioned above. As the transformation from $G$ to $H'$ preserves the Euler characteristic, we have
\begin{align*}
\chi(G) &= \mathcal{O}(\varphi) - \left( \mathcal{O}(\alpha) - 2b \right) + \mathcal{O}(\alpha \varphi^{-1})\\
&= \chi(H')=\mathcal{O}(\psi) - \mathcal{O}(\delta) + \mathcal{O}(\delta \psi^{-1}),
\end{align*}
that is
\begin{equation*}
\mathcal{O}(\varphi) + \mathcal{O}(\alpha \varphi^{-1}) = \mathcal{O}(\psi) + \mathcal{O}(\delta \psi^{-1}).
\end{equation*}
\end{proof}

Let us go back to the permutations that define $Q^D_p$. We deduce from \Cref{quartic-simpl-asympt} an intermediate result that will be necessary to prove \Cref{quartic-face-moments}:
\begin{prop}
\label{quartic-asympt-bound}
Let $\mathcal{O}_{i,j}$ be the number of cycles of $\alpha_i \alpha_j^{-1}$. Then:
\begin{equation*}
\Expect{\mathcal{O}_{0,1}\mathcal{O}_{D,0}} =\BigO{(\ln{p})^3}.
\end{equation*}
\end{prop}

\begin{proof}
We first write:
\begin{equation*}
\Expect{\mathcal{O}_{0,1}\mathcal{O}_{D,0}}=\frac{1}{2}\left(\Expect{\left(\mathcal{O}_{0,1}+ \mathcal{O}_{D,0}\right)^2} - \Expect{\mathcal{O}_{0,1}^2} + \Expect{\mathcal{O}_{D,0}^2}\right).
\end{equation*}
Then, from \Cref{quartic-simpl-asympt}, when the number $2b$ of fixed points of $\alpha_D \alpha_1 ^{-1}$ is fixed, we have:
\begin{equation*}
\Expect{\left(\mathcal{O}_{0,1}+ \mathcal{O}_{D,0}\right)^2 \, \lvert \, \alpha_{D,1} \text{ has } 2b \text{ fixed points }} = \Expect{\left(\mathcal{O}(\psi) + \mathcal{O}(\delta \psi^{-1})\right)^2}
\end{equation*}
with the notations of \Cref{quartic-simpl-asympt}. Now, with the same techniques as in \Cref{subsection-unif-degree}, we prove that, conditionally to the conjugacy class $\mathcal{C}_{\psi}$ of $\psi$, $\delta \psi^{-1}$ is asymptotically uniform on $H=\mathcal{A}_{2(p-b)}$ (resp.  $H=(\mathcal{A}_{2(p-b)})^{c}$) if $\psi$ and $\delta$ are of the same parity (resp. of opposite parities). More precisely,
\begin{equation*}
\lVert P_{\delta \psi^{-1}} - \mathcal{U}_H \rVert = \BigO{\frac{(\ln{(p-b)})^2}{p-b}}.
\end{equation*}
Thus
\begin{align*}
\Expect{\mathcal{O}(\psi)\mathcal{O}(\delta \psi^{-1})}&=\sum_{1 \leq c_1, c_2 \leq 2(p-b)} c_1 c_2 \Prob{\mathcal{O}(\psi)=c_1, \mathcal{O}(\delta \psi^{-1})=c_2}\\
&=\sum_{1 \leq c_1 \leq 2(p-b)} c_1 \Prob{\mathcal{O}(\psi)=c_1}
  \Big[\BigO{(\ln{(p-b)})^2}\\ 
    &\phantom{=} +\sum_{1 \leq c_2 \leq 2(p-b)} 2 \cdot
    \Indic{(-1)^{c_1+c_2}=(-1)^{p-b}} \cdot
    c_2\Prob{\mathcal{O}(\delta \psi^{-1})=c_2}\Big]\\
&= \sum_{1 \leq c_1 \leq 2(p-b)} c_1 \Prob{\mathcal{O}(\psi)=c_1} \Big[\BigO{(\ln{(p-b)})^2} + \BigO{\ln{(p-b)}}\Big]\\
&= \BigO{(\ln{(p-b)})^2} \sum_{1 \leq c_1 \leq 2(p-b)} c_1 \Prob{\mathcal{O}(\psi)=c_1}\\
&= \BigO{(\ln{(p-b)})^3}.
\end{align*}
This implies that
\begin{equation*}
\Expect{\left(\mathcal{O}(\psi) + \mathcal{O}(\delta \psi^{-1})\right)^2}= \BigO{(\ln{(p-b)})^3},
\end{equation*}
and so $\Expect{\mathcal{O}_{0,1}\mathcal{O}_{D,0}}=\BigO{(\ln{(p-b)})^3}$ as well.
\end{proof}

We can now prove \Cref{quartic-face-moments}.

\begin{proof}[of \Cref{quartic-face-moments}]
To prove \Cref{quartic-face-mean}, we first make a clear list of the different types of faces:
\begin{itemize}
\item the faces without color 0 are, by definition, subsets of the $\hat{0}$-bubbles. They either contain the distinguished color of their $\hat{0}$-bubble, in which case they have 4 vertices, or they do not, in which case they have 2 vertices. There are, in each $\hat{0}$-bubble, $D-1$ faces of the first type, and $(D-1)(D-2)$ of the second.
\item For each $i \neq 0$, one can see the bicolored graph $(Q^D_p)_{i,0}$ as $2p$ vertices with one in- and one out-half-edge, with a uniform matching of the half-edges, similarly to the construction of $S^{D,j}_p$. The faces of color $\{0,i\}$ are thus given by the cycles of a uniform permutation of size $2p$.
\end{itemize}
Therefore:
\begin{align*}
\Expect{b_2(Q^D_p)}&= \left( (D-1)(D-2) + (D-1) \right)p + D\left(\sum_{j=1}^{2p}\frac{1}{j}\right)\\
& = (D-1)^2p + D(\ln{(2p)} + \gamma) + \smallo{1}.
\end{align*}

To obtain the variance, we now have to calculate $\Expect{\left(b_2(Q^D_p)\right)^2}$. We decompose it into different parts as follows:
\begin{align*}
\Expect{\left(b_2(Q^D_p)\right)^2} &= \Expect{\left(\sum_{0 \leq i < j \leq D} \mathcal{O}\left(\alpha_i \alpha_j^{-1}\right)\right)^2}\\
&=\underbrace{\Expect{\left(\sum_{1 \leq i < j \leq D} \mathcal{O}\left(\alpha_i \alpha_j^{-1}\right)\right)^2}}_{E_1} + \underbrace{\ \ 2 \ \ \Expect{\sum_{\substack{1 \leq i \leq D \\ 1 \leq k < l \leq D }} \mathcal{O}\left(\alpha_0 \alpha_i^{-1}\right)\mathcal{O}\left(\alpha_k \alpha_l^{-1}\right)}}_{E_2}\\
&\phantom{=} + \underbrace{\sum_{1\leq i \leq D}\Expect{ \mathcal{O}\left(\alpha_0 \alpha_i^{-1}\right)^2}}_{E_3} + \ \ \underbrace{ \ \ 2 \sum_{1\leq i < j \leq D}\Expect{ \mathcal{O}\left(\alpha_0 \alpha_i^{-1}\right)\mathcal{O}(\alpha_0 \alpha_j^{-1})}}_{E_4}.
\end{align*}
In the first term, $E_1$, there are all the faces without color 0. By definition, the number of those is fixed:
\begin{equation*}
\sum_{1 \leq i < j \leq D} \mathcal{O}\left(\alpha_i \alpha_j^{-1}\right)=p((D-1)(D-2)+(D-1))=p(D-1)^2,
\end{equation*}
so that $E_1=p^2(D-1)^4$. Moving on to $E_2$, for any $i,k,l \in \{1,2, \dots, D\}$, the permutations $\alpha_0 \alpha_i^{-1}$ and $\alpha_k \alpha_l^{-1}$ are independent, so:
\begin{align*}
E_2&=2 \, \Expect{\sum_{1 \leq i \leq D} \mathcal{O}\left(\alpha_0 \alpha_i^{-1}\right)} \cdot \Expect{\sum_{1 \leq k < l \leq D} \mathcal{O}\left(\alpha_k \alpha_l^{-1}\right)} \\
&= 2 D\left(\sum_{j=1}^{2p}\frac{1}{j}\right) \cdot (D-1)^2 p,
\end{align*}
using, for the first part, the fact that all $i \in \{1, 2, \dots, D\}$, $\alpha_0 \alpha_i^{-1}$ is uniform, and, for the second part, the same argument as for $E_1$. Thus
\begin{equation*}
E_2=2 D(D-1)^2 p\left(\sum_{j=1}^{2p}\frac{1}{j}\right).
\end{equation*}
For $E_3$, we are dealing with uniform permutations, so:
\begin{align*}
E_3&= D((\ln{2p})^2+(2\gamma -1)\ln{(2p)}+\smallo{\ln{p}})\\
&= D(\ln{p})^2 + \BigO{\ln{p}}.
\end{align*}
We are now left with $E_4$, which we estimate thanks to \Cref{quartic-simpl-asympt}: $E_4=\BigO{(\ln{p})^3}$.

Therefore
\begin{align*}
\Var{b_2(Q^D_p)}&=\left(p^2 (D-1)^4 + 2 D(D-1)^2 p\left(\sum_{j=1}^{2p}\frac{1}{j}\right) +\BigO{(\ln{p})^3} \right)\\
 & \phantom{=} - \left((D-1)^2p + D\left(\sum_{j=1}^{2p}\frac{1}{j}\right)\right)^2\\
 & = \BigO{(\ln{p})^3}.
\end{align*}
\end{proof}

\begin{proof}[of \Cref{quartic-jacket-normal-law}]
Like in \Cref{subsection-unif-degree}, by symmetry, we assume without loss of generality that $\mathcal{J}_p$ corresponds to the usual cyclic ordering $(0 \, 1 \, \dots \, D)$, and in that case we are interested in the quantity:
\begin{equation*}
F = \sum_{0 \leq i  \leq D}\mathcal{O}_{i,i+1} 
\end{equation*} 
where, by convention, $\mathcal{O}_{D,D+1} = \mathcal{O}_{D,0}$. From the structure of the $\alpha_i$, for $1 \leq i \leq D$, we deduce
\begin{equation*}
\sum_{1 \leq i \leq D-1}\mathcal{O}_{i,i+1} + \mathcal{O}_{D,1} = 2p(D-1),
\end{equation*}
hence:
\begin{equation*}
F = \mathcal{O}_{0,1} + \mathcal{O}_{D,0} + 2p(D-1) - \mathcal{O}_{D,1}.
\end{equation*}

Thus, up to a constant, $F$ follows the same law as $f:= \mathcal{O}_{0,1} + \mathcal{O}_{D,0}  - \mathcal{O}_{D,1}$. Now, conditionally to the number $2b$ of fixed points of $\alpha_D \alpha_1^{-1}$, \Cref{quartic-simpl-asympt} gives us the behavior of $f$. Indeed, it has the same law as $\mathcal{O}(\psi) + \mathcal{O}(\delta \psi^{-1}) - (p+b)$, with the notations of \Cref{quartic-simpl-asympt}.\\

As explained above in the proof of \Cref{quartic-asympt-bound}, conditionally to the conjugacy class $\mathcal{C}_{\psi}$ of $\psi$, the law of $\delta \psi^{-1}$ converges for the total variation distance to the uniform measure either on $\mathcal{A}_{2(p-b)}$ (if $\mathcal{C}_{\psi}$ has the same parity as $\delta$), or on  $(\mathcal{A}_{2(p-b)})^c$ (if they are of opposite parities). 
Thus:
\begin{equation}\label{f-semi-convo}
\begin{aligned}
\Prob{f=l}&= \sum_{1 \leq b \leq p}\Prob{\mathcal{O}_{D,1}=p+b}\Prob{\mathcal{O}(\psi) + \mathcal{O}(\delta \psi^{-1})=l+p+b}\\
&=\sum_{1 \leq b \leq p}\Prob{\mathcal{O}_{D,1}=p+b} \Big[2 \cdot \Indic{l \text{ \scriptsize even}} \left(P_{\psi}* P_{\delta \psi ^{-1}}\right)(l+p+b) +&\\
  &&\hspace{-1cm}\BigO{\frac{(\ln{(p-b)})^2}{p-b}} \Big]
\end{aligned}
\end{equation}
where $P_{\sigma}$ is the law of the number of cycles of $\sigma$.\\

Now, let us recall that for a uniform permutation $\sigma \in \mathfrak{S}_n$, $\mathcal{O}(\sigma)$ has the same distribution as $\sum_{j=1}^{n}B_j$, where the $B_j$ are independent Bernoulli variables, with $B_j$ of parameter $1/j$. We use this fact to bound the dependence on $b$ of $P_{\psi}$ and $P_{\delta \psi^{-1}}$. Indeed, as we will later use Hoeffding's inequality for $b$, we consider the case $\lvert b - p \left(\frac{D-2}{D}\right) \rvert \leq \sqrt{p\ln{p}}$. We can couple $\mathcal{O}(\psi)$ with a variable of the form $ \sum_{j=1}^{\lfloor \frac{4p}{D} \rfloor} B_j$,by taking or adding some $B_j$ to the sum to get $\mathcal{O}(\psi)$, and in that case:
\begin{equation}
\label{psi-approx-1}
\Prob{\Big\lvert \mathcal{O}(\psi) - \sum_{j=1}^{\lfloor \frac{4p}{D} \rfloor} B_j \Big\rvert \geq 1} = \BigO{\sqrt{\frac{\ln{p}}{p}}}.
\end{equation}
Indeed:
\begin{equation*}
\Big\lvert \mathcal{O}(\psi) - \sum_{j=1}^{\lfloor \frac{4p}{D} \rfloor} B_j \Big\rvert \leq B\left(\BigO{\sqrt{p\ln{p}}},\Big\lfloor \frac{4p}{D} \Big\rfloor \right)
\end{equation*}
where $B(\BigO{\sqrt{p\ln{p}}},\lfloor \frac{4p}{D} \rfloor )$ follows a binomial distribution of parameters $(\BigO{\sqrt{p\ln{p}}},(\lfloor \frac{4p}{D} \rfloor)^{-1} )$. Then, applying Markov's inequality to $B(\BigO{\sqrt{p\ln{p}}},\lfloor \frac{4p}{D} \rfloor )$:
\begin{equation*}
\Prob{B\left(\BigO{\sqrt{p\ln{p}}}, \Big\lfloor \frac{4p}{D} \Big\rfloor \right) \geq 1 } = \smallo{\frac{1}{\sqrt{\ln{p}}}}.
\end{equation*}

Let us note $g^D_p$ the law of $\sum_{j=1}^{\lfloor \frac{4p}{D} \rfloor} B_j$. Similarly to what we did in the proof of \Cref{unif-jacket-normal-law}, we show that $g^D_p$ converges uniformly to a discrete gaussian distribution, of mean $E_p=\ln{p} + \gamma + \ln(\frac{4}{D}) + \smallo{1}$, and of variance  $V_p=\ln{p} + \gamma + \ln(\frac{4}{D}) - \pi^2/6 + \smallo{1}$:
\begin{equation}
\label{psi-approx-2}
g^D_p(c)=\frac{(1+\smallo{1})\exp{\left(\frac{-(c-E_p)^2}{2V_p}\right)}}{\sqrt{2\pi V_p}}.
\end{equation}
As \Cref{psi-approx-2} implies in particular that $g^D_p(c)$ is slowly varying, we deduce from \Cref{psi-approx-1}:
\begin{equation}
\label{psi-approx-3}
P_{\psi}(c)=g^D_p(c)+\smallo{\frac{1}{\sqrt{\ln{p}}}}.
\end{equation}
Now, applying Hoeffding's inequality to $b$, we get:
\begin{equation}
\label{b-hoeffding}
\Prob{\Big\lvert b - p\left(\frac{D-2}{D} \right)\Big\rvert \geq \sqrt{p\ln{p}}}= \BigO{\frac{1}{p^2}}.
\end{equation}
Combining \Cref{f-semi-convo,psi-approx-3,b-hoeffding}, we can write:
\begin{equation*}
\Prob{f=l}= 2\cdot \Indic{l \text{ \scriptsize even}} (\tilde{P}_{D,1} * g^D_p * g^D_p)(l) + \smallo{\frac{1}{\sqrt{\ln{p}}}}
\end{equation*}
where $\tilde{P}_{D,1}$ is the law of $-\mathcal{O}_{D,1}$.\\

Thus:
\begin{equation*}
\Big\lVert P_f - 2 P_h \cdot \Indic{h \text{ \scriptsize even}}\Big\rVert = \smallo{\frac{1}{\sqrt{\ln{p}}}},
\end{equation*}
where $h=\sum_{1 \leq j \leq \lfloor \frac{4p}{D} \rfloor}X_j + Y + p$, with the $X_j$ and $Y$ being independent binomial variables, with $X_j$ of parameters $(2, 1/j)$ and $Y$ of parameters $(p, (D-2)/D)$. The Lindeberg Central Limit Theorem and the Local Limit Theorem apply once again, and the resulting convergence of $P_h$ to a discrete gaussian implies that of $P_f$.\\
Finally, as $\Expect{f}=2p(D-1)/D+2\ln{p}+\BigO{1}=\Expect{h} +\BigO{1}$, and $\Var{f}=2p(D-2)/D^2+\BigO{(\ln{p})^3}=\Var{h}+\BigO{(\ln{p})^3}$, we have a weak convergence result for $(f- \Expect{f})/\sqrt{\Var{f}}$.
\end{proof}

As anounced before, the arguments we have used here to get results on the degree of $Q^{D}_p$ also imply a Central Limit Theorem for the genus of a uniform random map, in a sense that we make more precise now. Consider, for a given $p \geq 1$, a uniform fixed-point-free involution of $\{1, \dots, 2p\}$, $\delta$, and a uniform permutation $\psi \in \mathfrak{S}_{2p}$, independent from $\delta$. Then $\delta, \psi, \delta \psi^{-1}$ respectively describe the edges, faces and vertices of a (half-edge-labelled) ribbon graph $M_p$ with $p$ edges, which is clearly uniform on the set of such graphs.\\

We first prove that $M_p$ is \emph{a.a.s.} connected:

\begin{prop}
One has:
\begin{equation}
\label{rib-graph-conn}
\Prob{M_p \text{ connected}}= 1 - \frac{1}{2p-1} + \BigO{\frac{1}{p}}.
\end{equation}
\end{prop}

\begin{proof}
Notice that the probability of $M_p$ being connected is exactly the same as the probability of $Q^{D}_p$ being connected (for any $D \geq 2$). Indeed, in both cases we consider the set of indices $\{1, \dots, 2p\}$ grouped into pairs (by $\delta$ for $M_p$ and by the structure of the $\hat{0}$-bubbles for $Q^{D}_p$), and then add connections according to a uniform permutation independent from the pairing ($\psi$ for $M_p$ and $\alpha_{0}$ for $Q^D_p$). We therefore deduce \Cref{rib-graph-conn} directly from \Cref{quartic-connect}.
\end{proof}

Now that we know that $M_p$ is connected \emph{a.a.s.}, we can consider its genus, for which we prove the following theorem:
\begin{thm}
\label{rib-graph-jack}
Let $g_p$ be the genus of $M_p$. Then the quantity $\frac{g_p - \Expect{g_p}}{\sqrt{\Var{g_p}}}$ converges weakly to the standard normal distribution. Moreover, we have the following estimations:
\begin{align*}
\Expect{g_p}&=\frac{p}{2} - (\ln{2p} + \gamma) +1 + \smallo{1},\\
\Var{g_p}&=\BigO{(\ln{p})^3}.
\end{align*}
\end{thm}

\begin{proof}
Let us start with the first statement of the theorem. As explained in the proof of \Cref{quartic-asympt-bound}, conditionally to the conjugacy class $\mathcal{C}_{\psi}$ of $\psi$, $\delta \psi^{-1}$ is asymptotically uniform on $H=\mathcal{A}_{2p}$ (resp. $H=\left(\mathcal{A}_{2p}\right)^c$) if $\psi$ and $\delta$ are of the same parity (resp. of opposite parities), indeed:
\begin{equation*}
\lVert P_{\delta \psi^{-1}} - \mathcal{U}_H \rVert=\BigO{\frac{(\ln{p})^2}{p}}.
\end{equation*} 
Therefore, just like we derived \Cref{unif-jacket-faces-law} from \Cref{unif-asympt-indep}, we get:
\begin{equation*}
\lVert P_{\delta, \psi} - 2 \cdot \Indic{C_{\delta, \psi}+p \text{ even}} P_{\psi} * P_{\delta \psi^{-1}} \rVert = \BigO{\frac{(\ln{p})^2}{p}}
\end{equation*}
where $P_{\delta,\psi}$ is the law of $C_{\delta, \psi}:=\mathcal{O}(\psi) + \mathcal{O}(\delta \psi^{-1})$.

Now, following the same arguments as in the proof of \Cref{unif-jacket-normal-law}, we obtain that the quantity $\frac{C_{\delta, \psi} - \Expect{C_{\delta, \psi}}}{\sqrt{C_{\delta, \psi}}}$ converges weakly to the standard normal law. Finally, as $g_p=\frac{1}{2}\left(p-C_{\delta, \psi}\right)+1$, we have a weak convergence result for $g_p$ as well. As for the estimations of $\Expect{g_p}$ and $\Var{g_p}$, the first is a direct consequence of the fact that $\psi$ and $\delta \psi^{-1}$ are uniform permutations, and the second was proved in the proof of \Cref{quartic-asympt-bound}.
\end{proof}

\section{Uniform-uncolored models}
\label{sect-unif-decol}

Random $D$-complexes studied by physicists are mainly of the type of the quartic model: the $\hat 0$-bubbles of the corresponding colored graphs all belong to a fixed finite set of $D$-colored graphs. In the quartic model studied in \cref{sect-quartic}, this set is reduced to a singleton, namely the quartic melonic graph of \Cref{quartic-mel}. Melonic $D+1$-colored graphs are dual to special colored triangulations of the $D$-sphere \cite{gurau-ryan}. A natural question is the following: do the results obtained for the quartic model also hold in the case of more generic $\hat 0$-bubbles set and particularly if this set contains non melonic graphs? To answer this question, we now consider a type of model similar to the quartic one, where the $\hat{0}$-bubbles are all identical, up to color permutations.\\

More precisely, we start from a fixed, connected bipartite $D$-colored graph $G$ with $2t \geq 4$ vertices, and we consider a random $(D+1)$-colored graph $G_p$, with $p$ $\hat{0}$-bubbles $\mathcal{B}^{\hat{0}}_1, \dots, \mathcal{B}^{\hat{0}}_p$, which are all copies of $G$ up to a color permutation: if an edge $e$ of $G$ has color $c(e) \in \{1, \dots, D\}$, then the corresponding edge $e'$ of  $\mathcal{B}^{\hat{0}}_k$ will have color $c_k(e')=\gamma_k(c(e))$, where $\gamma_1, \dots, \gamma_p$ are i.i.d. uniform permutations in $\mathfrak{S}_D$ (see \Cref{unif-decol-model} for an illustration).\\
Similarly to the quartic case, the $0$-colored edges of $G_p$ are given by a permutation $\alpha_0$ uniform on $\mathfrak{S}_{p\cdot t}$, and independent from the rest (we can choose an arbitrary labelling $v^{B}_1, \dots,v^{B}_t, v^{W}_1, \dots, v^{W}_t$ of the vertices of $G$ to obtain a canonical labelling of the vertices of $G_p$).\\

The connectivity properties of this model are very similar to the specific case of the quartic model seen in \Cref{subsect-quartic-cc}, as detailed in \Cref{subsect-unif-decol-cc}. In particular, the complex of the general uniform-uncolored model $\Delta(G_p)$ exhibits the same asymptotic structure as for the quartic model, \emph{i.e.} a majority of $0$-points, most of which having the ``colored hubs'' as common neighbors, as explained in \Cref{subsect-unif-decol-geom}. As for the behaviors of the degree and of the genera of the jackets, they are difficult to obtain in the general case, as we do not have an explicit description of $G_p$ in terms of $D+1$ random permutations $(\alpha_0, \alpha_1, \dots, \alpha_D) \in \mathfrak{S}_p^{D+1}$. Even if we use such a description for a particular case, the laws of the $\alpha_i$ will typically be much more complicated than for $Q^D_p$, and might not yield explicit results or estimations as easily. Moreover, because of the unsatisfying connectivity properties of this model, it does not seem very worthwhile to undertake such a laborious task.

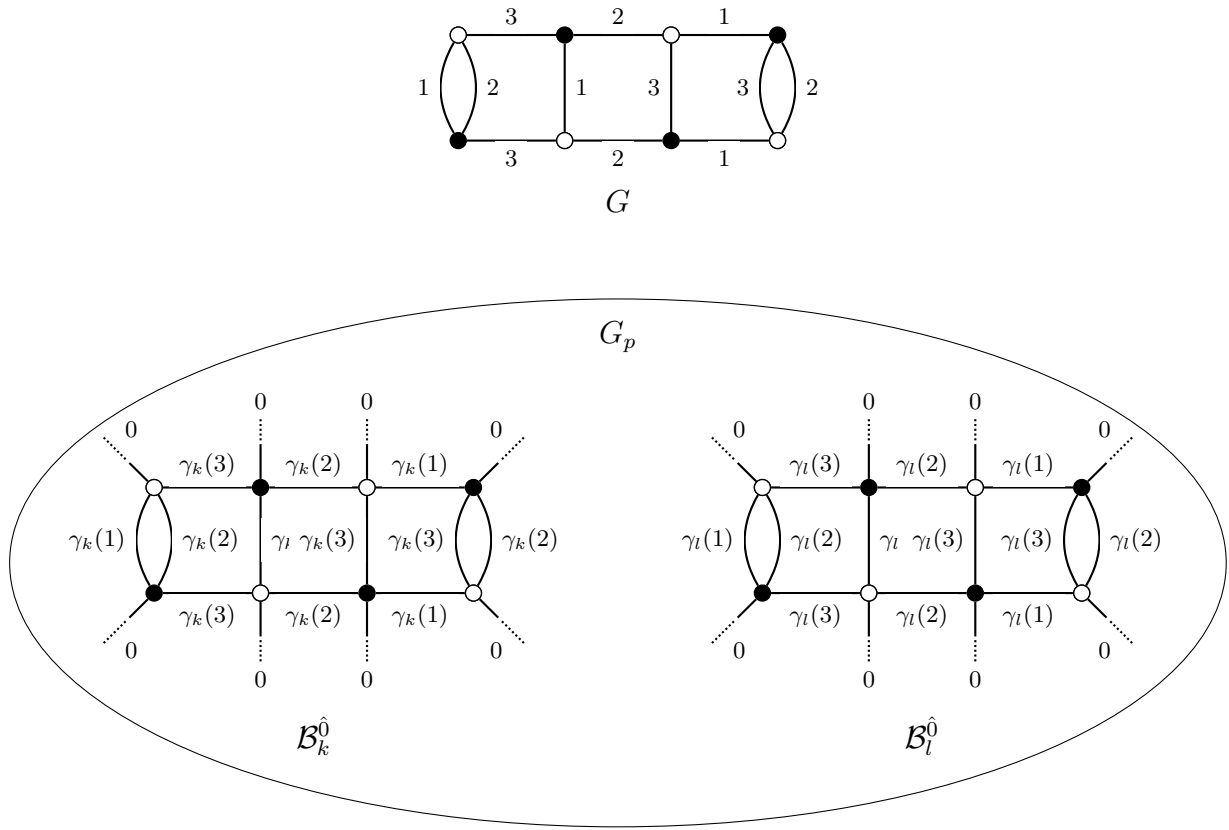
\begin{figure}[htp]
\begin{tikzpicture}
\SetGraphUnit{1.4}
\GraphInit[vstyle=simple]
\Vertex[x=0, y=0]{1u1} 
\Vertex[x=-4, y=-6]{1u3}
\Vertex[x=4, y=-6]{1u4}
\foreach \s in {1,3,4}{
\SO(1u\s){1d\s};
}
\foreach \i in {1,2,3}{
  \pgfmathtruncatemacro{\j}{\i+1};
  \pgfmathtruncatemacro{\c}{4-\i};
  \foreach \s in {1,3,4}{
    \EA(\i u\s){\j u\s};
    \EA(\i d\s){\j d\s};
    \ifnum \s=1
      \def\t{}; \def\u{};
    \else 
      \ifnum \s=3
        \def\t{$\gamma_k ($}; \def\u{)};
      \else
        \def\t{$\gamma_l ($}; \def\u{)};
      \fi
    \fi
    \Edge[labelstyle={above}, label={\footnotesize \t\c\u}](\i u\s)(\j u\s); 
    \Edge[labelstyle={below}, label={\footnotesize \t\c\u}](\i d\s)(\j d\s);
    \ifnum \i=2
      \Edge[labelstyle={right},label={\footnotesize \t 1\u}](\i u\s)(\i d\s);
    \fi
    \ifnum \i=3
      \Edge[labelstyle={left},label={\footnotesize \t\i\u}](\i u\s)(\i d\s);
      \Edge[style={bend right}, labelstyle={left}, label={\footnotesize \t 1\u}](1u\s)(1d\s); 
      \Edge[style={bend left}, labelstyle={right}, label={\footnotesize \t 2\u}](1u\s)(1d\s);
      \Edge[style={bend right}, labelstyle={left}, label={\footnotesize \t 3\u}](4u\s)(4d\s); 
      \Edge[style={bend left}, labelstyle={right}, label={\footnotesize \t 2\u}](4u\s)(4d\s);
    \fi
  }
}
\foreach \z in {0,1}{
\ifnum \z=0
  \def\symb{}; \def\L{}; \def\st{};
  \else \def\symb{z}; \def\L{\footnotesize 0}; \def\st{densely dotted};
  \fi
  \foreach \s in {3,4}{
    \NOWE[unit=0.35, empty=true](1u\s\symb){1u\s\symb z};
    \Edge[style={\st}, labelstyle={above right}, label=\L](1u\s\symb)(1u\s\symb z);
    \SOWE[unit=0.35, empty=true](1d\s\symb){1d\s\symb z};
    \Edge[style={\st}, labelstyle={below right}, label=\L](1d\s\symb)(1d\s\symb z);
    \NOEA[unit=0.35, empty=true](4u\s\symb){4u\s\symb z};
    \Edge[style={\st}, labelstyle={above left}, label=\L](4u\s\symb)(4u\s\symb z);
    \SOEA[unit=0.35, empty=true](4d\s\symb){4d\s\symb z};
    \Edge[style={\st}, labelstyle={below left}, label=\L](4d\s\symb)(4d\s\symb z);
    \foreach \i in {2,3}{
      \NO[unit=0.6, empty=true](\i u\s\symb){\i u\s\symb z};
      \SO[unit=0.6, empty=true](\i d\s\symb){\i d\s\symb z};
      \Edge[style={\st}, labelstyle={above}, label=\L](\i u\s\symb)(\i u\s\symb z);
      \Edge[style={\st}, labelstyle={below}, label=\L](\i d\s\symb)(\i d\s\symb z);
    }
  }
}
\foreach \s in {1,3,4}{
\AddVertexColor{white}{1u\s,3u\s,2d\s,4d\s};}
\draw (2.1,-7) ellipse (8 and 3.5);
\node at (2.1,-2.2) {\large$G$};
\node at (2.1,-4) {\large$G_p$};
\node at (-1.9,-9.3) {\large$\mathcal{B}^{\hat{0}}_k$};
\node at (6.1,-9.3) {\large$\mathcal{B}^{\hat{0}}_l$};
\end{tikzpicture}
\caption{Starting from a $D$-colored graph $G$ (here $D=3$), we construct a $(D+1)$-colored graph $G_p$, whose $p$ $\hat{0}$-bubbles are copies of $G$ in which the colors have been permuted.}
\label{unif-decol-model}
\end{figure}

\subsection{Connectedness}
\label{subsect-unif-decol-cc}

Just like in the quartic case, we show that $G_p$ is connected a.a.s., and moreover that the expectation value of its number of components converges to 1:
\begin{thm}
One has:
\begin{align*}
\Prob{G_p \text{ connected}}&=1 - \frac{p}{\begin{pmatrix}tp \\t \end{pmatrix}} + \BigO{\frac{1}{p^{2(t-1)}}},\\
\Expect{k(G_p)}&= 1 + \BigO{\frac{1}{p^{t-1}}}.
\end{align*}
\end{thm}
\begin{proof}
  We proceed in the same way as for \Cref{quartic-connect,quartic-cc}, using the fact that, for $1\leq k \leq \lfloor \frac{p}{2} \rfloor$, $\Prob{G_p \text{ has a closed subgraph containing $k$ copies of $G$}}=\binom{p}{k}/\binom{tp}{tk}$.
 \end{proof}

\begin{figure}[htp]
\centering
\begin{tikzpicture}
\SetGraphUnit{1.5}
\GraphInit[vstyle=simple]
\Vertex[x=0, y=0]{1u1} 
\Vertex[x=0, y=-4]{1u2}
\Vertex[x=0, y=-8]{1u3}
\foreach \s in {1,2,3}{
\SO(1u\s){1d\s};
}
\foreach \i in {1,2,3}{
  \pgfmathtruncatemacro{\j}{\i+1};
  \pgfmathtruncatemacro{\c}{4-\i};
  \foreach \s in {1,2,3}{
    \EA(\i u\s){\j u\s};
    \EA(\i d\s){\j d\s};
    \ifnum \s=\c
      \def\L{\footnotesize $i$}; \SetUpEdge[color=gray];
    \else \def\L{}; \SetUpEdge[color=black];
    \fi
      \Edge[labelstyle={above}, labeltext={gray}, label=\L](\i u\s)(\j u\s);
      \Edge[labelstyle={below}, labeltext={gray}, label=\L](\i d\s)(\j d\s);
    \ifnum \i=2
      \ifnum \s=1
        \def\L{\footnotesize $i$};\SetUpEdge[color=gray];
      \else \def\L{}; \SetUpEdge[color=black];
      \fi
      \Edge[labelstyle={right}, labeltext={gray}, label=\L](\i u\s)(\i d\s);
    \fi
    \ifnum \i=3
      \ifnum \s=3
        \def\L{\footnotesize $i$}; \SetUpEdge[color=gray];
      \else \def\L{}; \SetUpEdge[color=black];
      \fi
        \Edge[labelstyle={right}, labeltext={gray}, label=\L](\i u\s)(\i d\s);   
      
      \ifnum \s=1
        \def\M{\footnotesize $i$}; \SetUpEdge[color=gray];
      \else \def\M{}; \SetUpEdge[color=black];
      \fi
      \Edge[style={bend right}, labelstyle={left}, labeltext={gray}, label=\M](1u\s)(1d\s);
      \ifnum \s=2
        \def\N{\footnotesize $i$}; \SetUpEdge[color=gray];
      \else \def\N{}; \SetUpEdge[color=black];
      \fi
      \Edge[style={bend left}, labelstyle={right}, labeltext={gray}, label=\N](1u\s)(1d\s);
      \Edge[style={bend left}, labelstyle={right}, labeltext={gray}, label=\N](4u\s)(4d\s);
      \ifnum \s=3
        \def\L{\footnotesize $i$}; \SetUpEdge[color=gray];
      \else \def\L{}; \SetUpEdge[color=black];
      \fi
      \Edge[style={bend right}, labelstyle={left}, labeltext={gray}, label=\L](4u\s)(4d\s); 
    \fi
  }
}
\foreach \s in {1,2,3}{
\AddVertexColor{white}{1u\s,3u\s,2d\s,4d\s};}
\draw[->] (6.75,-0.75) -- (8,-0.75);
\draw[->] (6.75,-4.75) -- (8,-4.75);
\draw[->] (6.75,-8.75) -- (8,-8.75);

\tikzset{VertexStyle/.append style = {minimum size = 3pt, inner sep = 0pt}}
\Vertex[x=10,y=-0.75]{s11}; 
\Vertex[x=12.5,y=-0.75]{s12};
\foreach \p [count=\i from 2] in {\NOWE, \NO, \NOEA, \SOEA, \SO, \SOWE}{
  \foreach \z in {0,1}{
    \ifnum \z=0
      \def\symb{}; \def\u{0.6};
      \pgfmathtruncatemacro{\j}{or(\i==2, or(\i==4, \i==6))};
      \ifnum \j=1
        \def\st{-<-=.5};
      \else \def\st{->-=.5};
      \fi
    \else \def\symb{\i}; \def\st{densely dotted}; \def\u{0.3};
    \fi
    \p[unit=\u,empty=true](s11\symb){s11\symb\i};
    \Edge[style={\st}](s11\symb)(s11\symb\i); 
    \ifnum \i=2
      \SO[unit=\u,empty=true](s12\symb){s12\symb\i};
      \Edge[style={\st}](s12\symb)(s12\symb\i);
    \fi
    \ifnum \i=3
      \NO[unit=\u,empty=true](s12\symb){s12\symb\i};
      \Edge[style={\st}](s12\symb)(s12\symb\i);
    \fi
  }
}

\Vertex[x=10,y=-4.75]{s21}; 
\Vertex[x=12.5,y=-4.75]{s22};
\foreach \p in {1,2}{
  \foreach \q [count=\i] in {\NOWE,\NOEA, \SOEA, \SOWE}{
    \foreach \z in {0,1}{
       \ifnum \z=0
         \def\symb{}; \def\u{0.6};
         \pgfmathtruncatemacro{\j}{or(\i==1, \i==3)};
         \ifnum \j=1
           \def\st{-<-=.5};
         \else \def\st{->-=.5};
         \fi
       \else \def\symb{\i}; \def\st{densely dotted}; \def\u{0.3};
       \fi
       \q[unit=\u,empty=true](s2\p\symb){s2\p\symb\i};
       \Edge[style={\st}](s2\p\symb)(s2\p\symb\i); 
    }
  }
}

\Vertex[x=12.5,y=-8.75]{s31}; 
\Vertex[x=10,y=-8.75]{s32};
\foreach \p [count=\i from 2] in {\NOWE, \NO, \NOEA, \SOEA, \SO, \SOWE}{
  \foreach \z in {0,1}{
    \ifnum \z=0
      \def\symb{}; \def\u{0.6};
      \pgfmathtruncatemacro{\j}{or(\i==2, or(\i==4, \i==6))};
      \ifnum \j=1
        \def\st{->-=.5};
      \else \def\st{-<-=.5};
      \fi
    \else \def\symb{\i}; \def\st{densely dotted}; \def\u{0.3};
    \fi
    \p[unit=\u,empty=true](s31\symb){s31\symb\i};
    \Edge[style={\st}](s31\symb)(s31\symb\i); 
    \ifnum \i=2
      \SO[unit=\u,empty=true](s32\symb){s32\symb\i};
      \Edge[style={\st}](s32\symb)(s32\symb\i);
    \fi
    \ifnum \i=3
      \NO[unit=\u,empty=true](s32\symb){s32\symb\i};
      \Edge[style={\st}](s32\symb)(s32\symb\i);
    \fi
  }
}
\end{tikzpicture}
\caption{For the purpose of studying the $\hat{\imath}$-bubbles of $G_p$, we replace each of its $\hat{0}$-bubbles by vertices, whose number, in- and out-degrees are prescribed by the positions of the $i$-edges in the $\hat{0}$-bubble. In this example we have taken the same base graph $G$ as in \Cref{unif-decol-model}.}
\label{unif-decol-simpl}
\end{figure}

Now, to obtain results about the asymptotic number of $\hat{\imath}$-bubbles of $G_p$, for a fixed $i \in \{1, 2, \dots, D\}$, we will consider the simplified graph $S^{i}_p$, defined like in the quartic case: the deletion of the edges of color $i$ decomposes any $\hat{0}$-bubble $\mathcal{B}_k$ into a certain number $r_i$ of connected components $\mathcal{B}^{(1)}_k, \dots, \mathcal{B}^{(r_i)}_k$, having respectively, say, $2n_1, \dots, 2n_{r_i}$ vertices (with $\sum_k n_k=t$). We associate to each component $\mathcal{B}^{(s)}_k$ a vertex $v_{k,s}$ with $n_s$ out- (resp. in-)going half-edges, corresponding to its black (resp. white) vertices, so that each half-edge inherits the labelling of its corresponding vertex (see \Cref{unif-decol-simpl} for an example). We then consider the graph $S^{i}_p$ on the vertices $(v_{k,s})$ (corresponding to all the components of all the $\hat{0}$-bubbles of $(G_p)_{\hat{\imath}}$), obtained by taking a uniform matching of the in- and out-half-edges: as explained in \Cref{subsect-quartic-cc} for the quartic case, this translates the uniformity of $\alpha_0$, and thus the connected components of $(G_p)_{\hat{\imath}}$ correspond to those of $S^{i}_p$.\\

Now, just like in the quartic case, we want to apply \Cref{cooper-frieze-thm} to prove that $S^i_p$ has a giant component. First, let us translate the characteristics of $S^i_p$ in the notation of the directed configuration model. If deleting the color $j$ in $G$ gives rise to $r_j$ vertices of respective degrees $(\delta^j_1, \delta^j_1), \dots, (\delta^j_r, \delta^j_r)$ in our simplified formulation, then $S^{i}_p$ will have $q=\sum_{1 \leq j \leq D}q_j r_j$ vertices, where $q_j=\#\{k \in \{1, 2, \dots, p\} \big\lvert \gamma_k(j)=i\}$. And for each possible degree $1 \leq \delta \leq t$, the number of vertices of degree $(\delta, \delta)$ in $S^{i}_p$ will be: $l_{\delta,\delta}=\sum_{1 \leq j \leq D}q_jN_{\delta,j} $, with $N_{\delta,j}= \#\{v \in \{1, \dots, r_j\} \big\lvert \delta^j_v=\delta\}$.\\

Now, note that, as the $\gamma_k$ are uniform, a given $i \in \{1, 2, \dots, D\}$ has an equal probability to replace any color $j \in \{1,2, \dots, D\}$ in a given $\hat{0}$-bubble. Moreover, the $\gamma_k$ are independent, so that, for given $i, j \in \{1, 2, \dots, D\}$, the number of $\hat{0}$-bubbles in which $i$ replaces $j$ is a binomial variable of parameters $(\frac{1}{D},p)$:
\begin{equation*}
\#\{k \in \{1, 2, \dots, p\} \big\lvert \gamma_k(j)=i\}\sim\text{Bin}\left(\frac{1}{D},p\right).
\end{equation*}
Thus, applying Hoeffding's inequality, we have once again:
\begin{equation*}
\Prob{\Big\lvert\frac{q_j}{p} - \frac{1}{D} \Big\rvert \geq \sqrt{\frac{\ln{p}}{p}}} \leq \frac{2}{p^2},
\end{equation*}
and for $q_j \sim p/D$, we can write $l_{\delta,\delta}=p \cdot c_{\delta}(1+\smallo{1}), \, q=p \cdot c_q(1+\smallo{1}), \, \theta=\theta_0(1+\smallo{1}), \, d=d_0(1+\smallo{1})$, where:
\begin{alignat*}{2}
c_\delta&=\frac{1}{D}\sum_{1 \leq j \leq D}N_{\delta,j}, &\qquad c_q&=\sum_{1 \leq \delta \leq t}c_{\delta}\\
\theta_0&=\frac{\sum_{\delta} \delta c_{\delta}}{\sum_{\delta}c_{\delta}}, & d_0&=\frac{\sum_{\delta} {\delta}^2 c_{\delta}}{\sum_{\delta} {\delta}c_{\delta}}.
\end{alignat*}
\begin{figure}[htp]
\centering
\begin{tikzpicture}
\SetGraphUnit{3.5}
\GraphInit[vstyle=simple]
\begin{scope}[rotate=-72]   
\Vertices{circle}{v1,v2,v3,v4,v5,v6,v7,v8,v9,v10};
\end{scope}
\begin{scope}[rotate=-54] 
\Vertices[empty=true]{circle}{u1,u2,u3,u4,u5};
\end{scope}

\foreach \i in {1,2,...,5}{
  \tkzActivOff
  \coordinate (u\i1) at ($(u\i)!0.1!(0,0)$){};
  \tkzActivOn
  \Vertex[empty=true][Node]{u\i1}
  \Edge[style={densely dotted, thick}](u\i)(u\i1);
}

\foreach \dir [count=\c] in {right,above right,above left,left,flat}{
  \pgfmathtruncatemacro{\i}{2*\c};
  \pgfmathtruncatemacro{\j}{\i-1};
  \Edge[style={bend right}](v\i)(v\j);
  \Edge[style={bend left}](v\i)(v\j);
  \ifnum \i=10    
    \tkzActivOff
    \coordinate (v11) at ($0.75*(v10) + 0.25*(v1)$) {};
    \coordinate (v12) at ($0.25*(v10) + 0.75*(v1)$) {};
    \tkzActivOn
    \Vertex[empty=true][Node]{v11}
    \Vertex[empty=true][Node]{v12}
    \Edge[labelstyle={below},label={$j$}](v10)(v11);
    \Edge[labelstyle={below},label={$j$}](v12)(v1);
    \Edge[style={densely dotted}](v11)(v12);
  \else  
    \pgfmathtruncatemacro{\t}{or(\i==6, \i==8)};
    \pgfmathtruncatemacro{\j}{\i+1};  
    \def\lst{\dir=2.5pt};
    \Edge[style={bend right}, labelstyle={\lst}, label={$j$}](v\i)(v\j);
  \fi
  \AddVertexColor{white}{v\i};
}
\end{tikzpicture}
\caption{A pearl necklace of color $j$.}
\label{necklace}
\end{figure}
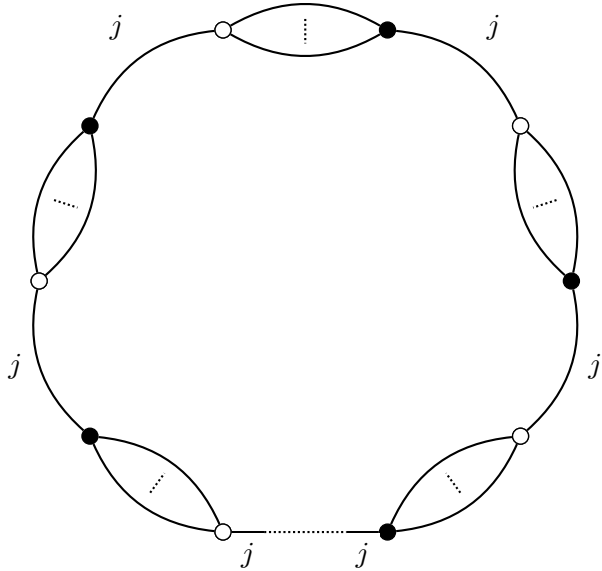

Thus, $S^i_p$ satisfies the hypotheses of \Cref{cooper-frieze-thm} as long as $d_0 > 1$, \emph{i.e.} as long as $c_1 < \sum_{\delta}c_{\delta}$. In other words, we can apply \Cref{cooper-frieze-thm} if there is at least one color $j \in \{1, 2, \dots, D\}$ such that $G_{\hat{\jmath}}$ is not made of melons only, which is always the case when $D \geq 2$. Indeed, if there is one color $j$ such that $G_{\hat{\jmath}}$ is only made of melons, then $G$ is necessarily a ``pearl necklace'' (see \Cref{necklace}), and in that case, for any other color $k \in \{1, 2, \dots, D \}\setminus\set{j}$, $G_{\hat{k}}$ is connected, and in particular not made of melons only.

\Cref{cycle-number-config-model} also applies, as we have, in the notation of the proposition: $\frac{l_{1,1}}{n}=\frac{c_1}{c_q} + \smallo{1}$. This yields the following theorem, analogous to \Cref{quartic-i-bubbles}:

\begin{thm}
\label{unif-decol-i-bubbles}
For $D \geq 3$, for any $i \in \{1, 2, \dots, D\}$, $G_p$ has a giant $\hat{\imath}$-bubble containing $2tp - \BigO{\sqrt{p \ln{p}}}$ vertices. Moreover, the expectation value of the number of $\hat{\imath}$-bubbles of $G_p$ is:
\begin{equation*}
\Expect{k(S^{i}_p)}= c_G +1 +o(1)
\end{equation*}
where $c_G=\sum_{k \geq 2}\frac{c_1^k}{k\left(c_q \theta_0\right)^k} < \infty$ as $c_1 < c_q \theta_0$.
\end{thm}

Hence, for the total number of $D$-bubbles of $G_p$:
\begin{cor}
\label{unif-decol-bubbles}
For $D \geq 3$, $\Expect{b_D(G_p)}= p + D \left(c_G +1 \right)+ \smallo{1}$.
\end{cor}

\subsection{Geometry of the complex}
\label{subsect-unif-decol-geom}

Just like in the particular case of the quartic model, the previous study of the $D$-bubbles of $G_p$ gives us insight on the typical geometry of the associated complex $\Delta(G_p)$. Indeed, once again there are $p$ 0-points in $\Delta(G_p)$, while for any $i \in \{1,2, \dots, D\}$ there are at most $\BigO{\sqrt{p \ln{p}}}$ $i$-points, one of which is the ``hub'' $i$-point having most 0-points as neighbors. The result of \Cref{quartic-avg-dist} thus also holds for any uniform-uncolored model, as long as $D \geq 3$:

\begin{thm}
Let $u,v$ be two vertices of $\Delta(G_p)$ chosen uniformly at random and independently. Then, if $D \geq 3$, $d(u,v)=2 \text{ a.a.s.}$.
\end{thm}

\begin{rem}
Let us note that what we have done for one fixed, connected base graph $G$ can be generalized further to models where this base graph $G$ is not necessarily connected (in which case, the copies of $G$ are not $\hat{0}$-bubbles but \emph{sets} of $\hat{0}$-bubbles), or where each $\hat{0}$-bubble is uniformly drawn from a finite set of (finite) $D$-colored graphs (with, once again, a randomization of the different colors): similar results will still apply, as long as the criterion $d_0 > 1$ is verified for the corresponding oriented configuration model. This is typically the case for all ``interaction vertices'' appearing in colored tensor models generalizing the quartic one.
\end{rem}

This brings an end to the pursuit of continuum scaling limits in the spirit of the Brownian sphere in uniform-uncolored models. As noted in \Cref{subsection-unif-conn}, the results of numerical simulations for Euclidean Dynamical Triangulations had already hinted that in higher dimensions, models of random triangulations with no or low constraints on curvature yield quite degenerate limit spaces. We can thus see both our uniform and uniform-uncolored models as instances of a crumpled phase, which would be the first ones to be investigated mathematically and not by simulations.

To find promising scaling limits, we must turn to more complicated models, and works in theoretical physics suggest that the Gurau degree (which is a way to quantify the curvature of trisps) must play a central role in our tentative distributions.

\section{Conclusion}

We have studied in this work two random models on bipartite $(D+1)$-colored graphs and the associated complexes. The first one, $U^D_p$, is uniform on the bipartite $(D+1)$-colored graphs with $2p$ labelled vertices. We have proved that, \emph{a.a.s.}, $U^D_p$ is connected and the associated complex $\Delta(U^D_p)$ has exactly one point of each color, which is a quite singular behavior, and is not satisfactory for the purpose of finding scaling limits. We have also obtained a Central Limit Theorem for the genus of one jacket of $U^D_p$. In the second one, $G_p$, we have fixed the $\hat{0}$-bubbles to be copies of a given $D$-colored graph $G$, and kept the matching of the 0-half-edges uniform. $G_p$ is also connected \emph{a.a.s.}, but the number of points of $\Delta(G_p)$ grows linearly with $p$. However, the average distance between two points of $\Delta(G_p)$ is \emph{a.a.s.} 2, which also halts our quest for a continuum limit. We have more extensively studied the special case of $Q^D_p$, in which $G$ is quartic. In that particular case,  we have also obtained a Central Limit Theorem for the genus of one jacket, and the arguments we employed also yielded a Central Limit Theorem for the genus of a uniform map of size $p$, as $p \to \infty$.\\

In the context of quantum gravity, we wish to find random models on colored complexes exhibiting a scaling limit, that can be interpreted as a continuum space-time. We have seen that these models do not fit this purpose. However, their study involved the adaptation of several combinatorial and probabilistic tools to the subject of edge-colored graphs, which will surely prove to be valuable when tackling more complicated models, as we plan to do. Those new models will undoubtely have to involve the Gurau degree more deeply, as it is a crucial quantity in the theory of colored tensor models.

\bigskip
\paragraph{Acknowledgements}

I warmly thank Fabien Vignes-Tourneret for his help and advice.

\contactrule
\contactACarrance

\newpage
\printbibliography

\end{document}